\newtheorem{theorem}{Theorem}[section]
\newtheorem{conjecture}[theorem]{Conjecture}
\newtheorem{corollary}[theorem]{Corollary}
\newtheorem{lemma}[theorem]{Lemma}
\newtheorem{proposition}[theorem]{Proposition}
\newtheorem{question}[theorem]{Question}
\theoremstyle{definition}
\newtheorem{definition}[theorem]{Definition}
\newtheorem{remark}[theorem]{Remark}
\newtheorem{setup}[theorem]{}
\newtheorem*{ack}{Acknowledgements}
\newtheorem*{claim*}{Claim}
\newtheorem*{note-term}{Notation and Terminology}
\numberwithin{equation}{section}
\newcommand{\RomanNumeralCaps}[1]{\MakeUppercase{\romannumeral #1}}
\newcommand{\id}{\operatorname{id}}
\newcommand{\Spec}{\operatorname{Spec}}
\newcommand{\bk}{\mathbf{k}}
\newcommand{\bP}{\mathbb{P}}
\newcommand{\bQ}{\mathbb{Q}}
\newcommand{\bR}{\mathbb{R}}
\newcommand{\cO}{\mathcal{O}}
\newcommand{\oK}{\overline{K}}
\title[Potential density]
{Potential density of projective varieties having an int-amplified endomorphism}
\author{Jia Jia, Takahiro Shibata, and De-Qi Zhang}
\address{
\textsc{National University of Singapore, Singapore 119076, Republic of Singapore}
}
\email{jia\_jia@u.nus.edu}
\address{
\textsc{National University of Singapore, Singapore 119076, Republic of Singapore}
}
\email{mattash@nus.edu.sg}
\address{
\textsc{National University of Singapore, Singapore 119076, Republic of Singapore}
}
\email{matzdq@nus.edu.sg}
\subjclass[2020]{
37P55, 
14G05, 
14E30, 
08A35. 
}
\keywords{Potential density, Int-amplified endomorphism, Arithmetic degree, Dynamical degree}
\begin{document}

\begin{abstract}
We consider the potential density of rational points on an algebraic variety defined over a number field $K$, i.e., the property that the set of rational points of $X$ becomes Zariski dense after a finite field extension of $K$.
For a non-uniruled projective variety with an int-amplified endomorphism, we show that it always satisfies potential density.
When a rationally connected variety admits an int-amplified endomorphism, we prove that there exists some rational curve with a Zariski dense forward orbit, assuming the Zariski dense orbit conjecture in lower dimensions.
As an application, we prove the potential density for projective varieties with int-amplified endomorphisms in dimension $\leq 3$.
We also study the existence of densely many rational points with the maximal arithmetic degree over a sufficiently large number field.
\end{abstract}

\maketitle
\setcounter{tocdepth}{1}
\tableofcontents

\section{Introduction}\label{s:intr}

Let $K$ be a number field with a fixed algebraic closure $\oK$.
Given a variety $X$ over $K$, we are interested in the set of $K$-rational points $X(K)$ of $X$.
More specifically, we study the \textit{potential density} of varieties over $K$.

\begin{definition}
A variety $X$ defined over a number field $K$ is said to satisfy \textit{potential density} if there is a finite field extension $K \subseteq L$ such that
$X_L(L)$ is Zariski dense in $X_L$, where $X_L \coloneqq X \times_{\Spec K} \Spec L$.
\end{definition}

The potential density of varieties over number fields has been investigated in several papers.
The potential density problem is attractive because the potential density of a variety is pretty much governed by its geometry.
See \cite{campana2004orbifolds} for a conjecture characterising varieties satisfying potential density.
However, algebraic varieties for which the potential density is verified are very few.
See \cite{hassett2003potential} for a survey of studies on the potential density problem.

In this paper, we first study the potential density of varieties admitting int-amplified endomorphisms.
For the definition of int-amplified endomorphisms, see \ref{notation}\ref{n:int_amp}.
Recently, the equivariant minimal model program for varieties with int-amplified endomorphisms was established (cf.~\cite{meng2020semi}).
It has been used to study arithmetic-dynamical problems (cf.~\cite{matsuzawa2019kawaguchi}, \cite{matsuzawa2020nondensity}).
It turns out that the equivariant minimal model program is also useful for the potential density problem.

Our main conjecture is the following.

\begin{conjecture}[Potential density under int-amplified endomorphisms]
\label{Conj:PD_int_A}
Let $X$ be a projective variety defined over a number field $K$.
Suppose that $X$ admits an int-amplified endomorphism.
Then $X$ satisfies potential density.
\end{conjecture}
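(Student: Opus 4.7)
The plan is to reduce Conjecture \ref{Conj:PD_int_A} to two extremal cases via an equivariant minimal model program and handle them separately. First I would enlarge $K$ so that $X$ and the int-amplified endomorphism $f$ are both defined over $K$, and then, after replacing $f$ by a suitable iterate, run an $f$-equivariant MMP in the sense of \cite{meng2020semi}. Each step is either a divisorial contraction, a flip, or a Mori fibration, and at each step $f$ descends to a new int-amplified endomorphism on the output. Since potential density is preserved by birational modifications (up to enlarging $K$ further by the finitely many field of definition issues that arise along the way), one is reduced to the following two building blocks: (i) a non-uniruled projective variety carrying an int-amplified endomorphism; and (ii) an equivariant Mori fibration $\pi\colon X\to Y$ with $\dim Y<\dim X$ and general fibre rationally connected.

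For (i) I would invoke the first main theorem announced in the abstract, which settles the non-uniruled case of Conjecture \ref{Conj:PD_int_A} unconditionally. Running the induction on $\dim X$, one can assume potential density for the base $Y$ in (ii) after a further finite extension $L/K$. The remaining task is then to propagate potential density from $Y$ to the total space $X$: it suffices to exhibit, for a Zariski-dense set of $L$-points $y\in Y(L)$, an extension $L'/L$ over which $X_y(L')$ is Zariski dense in $X_y$, together with a spreading-out argument showing that the union of these fibrewise dense sets is Zariski dense in $X$. Both ingredients are standard once one controls a single rationally connected fibre.

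The core problem is therefore to establish potential density on a rationally connected projective variety $F$ admitting an int-amplified endomorphism $g$ (here $F=X_y$ and $g$ is an appropriate iterate of $f$ restricted to $F$). The strategy I would follow is: (a) produce a rational curve $C\subset F$, defined over some finite extension, whose forward $g$-orbit $\{g^n(C)\}_{n\ge 0}$ is Zariski dense in $F$; (b) enlarge the ground field once more so that $C(L')$ is infinite, which is automatic for a rational curve over a sufficiently large number field; (c) conclude that $\bigcup_{n\ge 0} g^n(C(L'))\subset F(L')$ is Zariski dense in $F$ because its Zariski closure contains $\overline{\bigcup_n g^n(C)}=F$ by (a). Step (a) is precisely the second main result announced in the abstract, and it is the main obstacle: the authors can only verify it conditional on the Zariski dense orbit conjecture in dimension $<\dim F$, which is the genuinely hard dynamical input. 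Granting that conjecture in every dimension, the descent through the equivariant MMP outlined above completes the proof of Conjecture \ref{Conj:PD_int_A}; unconditionally the method delivers the full statement only when $\dim X\le 3$, consistent with the application announced in the abstract.
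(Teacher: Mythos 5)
There is a genuine gap: the statement you are addressing is a \emph{conjecture}, which the paper does not prove in general (it only establishes Propositions~\ref{t:pd_rc}, \ref{t:non-ur} and Theorem~\ref{t:main}), and even as a strategy for those partial cases your reduction is flawed at the Mori-fibration step. A fibre $X_y$ of the equivariant fibration $\pi\colon X\to Y$ is \emph{not} $f$-invariant ($f$ maps $X_y$ to $X_{(f|_Y)(y)}$), so there is no int-amplified endomorphism $g$ on $F=X_y$; your ``core problem'' of a rationally connected fibre carrying an int-amplified endomorphism never arises this way. The paper's mechanism is different: it uses a \emph{Zariski dense orbit} on the base $Y$ (available unconditionally when $\dim Y\le 2$ by the known cases of Conjecture~\ref{Conj:ZDO}, via Lemma~\ref{l:int-amp_ZDO}), picks the single fibre $F$ over a point of that orbit --- a klt Fano variety of dimension $1$ or $2$, hence rational --- and concludes that $O_f(F)$ is dense, so Lemma~\ref{l:subvar} applies. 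Your induction hypothesis ``potential density of $Y$'' is too weak to produce such an $F$, and your alternative gluing --- fibrewise dense points over a dense set of $y\in Y(L)$ --- fails because the extensions $L'/L$ vary with $y$ with no uniform bound, while potential density requires one fixed finite extension; moreover ``potential density of a rationally connected fibre'' is itself an open problem in general (only unirational varieties are known, which is exactly why the paper's argument closes only when the fibres have dimension $\le 2$).

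Two further issues: you run the equivariant MMP on an arbitrary projective $X$, but \cite{meng2020semi} requires ($\bQ$-factorial klt) singularity hypotheses --- this is why Theorem~\ref{t:main} carries that assumption --- and you cannot simply pass to a resolution or klt model, since an int-amplified endomorphism does not lift under birational modifications. Finally, note that if you grant Conjecture~\ref{Conj:ZDO} in all dimensions (as you do at the end), the conjecture follows immediately from Lemmas~\ref{l:int-amp_ZDO} and \ref{l:subvar}: a dense orbit of a single point becomes a dense set of rational points after a finite extension making that point rational, so the whole MMP apparatus is unnecessary on that conditional route; the paper's point is precisely to extract unconditional statements (non-uniruled in all dimensions, and dimension $\le 3$ with $\bQ$-factorial klt singularities) where ZDO is only invoked in dimension $\le 2$.
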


The endomorphism being int-amplified is a crucial assumption in Conjecture~\ref{Conj:PD_int_A} above.
Indeed, consider $X=X_1\times C$ where $X_1$ is any smooth projective variety and $C$ is any smooth projective curve of genus at least $2$.
Such $X$ does not satisfy potential density (cf.~Remark~\ref{r:after_conj}\ref{r:nonPD}).
It does not have any int-amplified endomorphisms either;
this is because every surjective endomorphism $f$ of $X$, after iteration, has the form $(x_1,x_2)\mapsto(g(x_1,x_2),x_2)$ for some morphism $g\colon X_1\times C\to X_1$ by \cite{sano2020dynamical}*{Lemma 4.5}, and hence descends to the identity map $\id_C$ on $C$ via the natural projection $X \to C$;
thus, the iteration and hence $f$ itself are not int-amplified (cf.~\cite{meng2020building}*{Lemma 3.7 and Theorem 1.1}).

One might think that Conjecture~\ref{Conj:PD_int_A} is too strong.
In fact, the following even stronger conjecture has already been long outstanding (cf.~Medvedev--Scanlon~\cite{medvedev2009polynomial}*{Conjecture 5.10}, and Amerik--Bogomolov--Rovinsky~\cite{amerik2011remarks}).

\begin{conjecture}[Zariski dense orbit conjecture]
\label{Conj:ZDO}
Let $X$ be a variety defined over an algebraically closed field $\bk$ of characteristic zero and $f\colon X\dashrightarrow X$ a dominant rational map.
If the $f^*$-invariant function field $\bk(X)^f$ is trivial, that is, $\bk(X)^f=\bk$, then there exists some $x\in X(\bk)$ whose (forward) $f$-orbit $O_f(x)\coloneqq\{f^n(x)\mid n\geq 0\}$ is well-defined (i.e., $f$ is defined at $f^n(x)$ for any $n\geq 0$) and Zariski dense in $X$.
\end{conjecture}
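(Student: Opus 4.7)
The statement is the well-known Zariski Dense Orbit Conjecture, open in general, so my proposal below is a strategy I would attempt rather than a complete proof. The plan is to reduce to two geometric extremes via birational geometry and handle each with different analytic input.

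First I would spread $X$ and $f$ out over a finitely generated subfield of $\bk$ (harmless, since Zariski density of a single orbit is a property insensitive to base field extension) and, after passing to a resolution of indeterminacy and replacing $f$ by an iterate, assume $X$ is smooth projective. The hypothesis $\bk(X)^f = \bk$ is birational in $X$ and stable under iteration of $f$, so these reductions are permitted. From here I would argue by induction on $\dim X$.

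Next I would apply the MRC fibration $\pi\colon X \dashrightarrow Z$, which after replacing $f$ by an iterate becomes $f$-equivariant. If $\dim Z \geq 1$, then the induced $\bar f\colon Z \dashrightarrow Z$ still satisfies $\bk(Z)^{\bar f} = \bk$, so by the inductive hypothesis there is a point $z \in Z(\bk)$ with Zariski-dense $\bar f$-orbit, and one is reduced to producing a dense orbit inside the general fibre of $\pi$ over some lift of that orbit. Two extreme cases therefore remain: $X$ non-uniruled, which I would attack through the Albanese morphism and a further reduction to abelian varieties (where Ghioca--Scanlon have settled the conjecture), and $X$ rationally connected, which is the case treated conditionally in the present paper. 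On each piece the decisive analytic tool would be the $p$-adic arc lemma of Bell--Ghioca--Tucker combined with an adelic Baire-category argument in the spirit of Amerik, Amerik--Campana, and Xie: choose a prime of good reduction, find a $p$-adic analytic neighbourhood on which $f$ is contracting to a periodic point, use the triviality of $\bk(X)^f$ to forbid any $f^*$-invariant proper subvariety from absorbing the orbit, and intersect over countably many candidate proper closed subsets to extract a point with Zariski-dense forward orbit.

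The main obstacle is the rationally connected case. Unlike in the non-uniruled or intermediate-Kodaira settings, there is no $f$-equivariant fibration of smaller dimension to descend to, so induction on $\dim X$ breaks down, and the dense orbit must be built intrinsically, ultimately on a single rational curve inside $X$. All known partial results in this direction require extra structure -- a polarization, int-amplification, a large symmetry group, or an arithmetic hypothesis of the sort exploited elsewhere in this paper -- and it is precisely the elimination of such assumptions that I expect to be the fundamental difficulty.
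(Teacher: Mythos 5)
This statement is stated in the paper as an open conjecture (Medvedev--Scanlon, Amerik--Bogomolov--Rovinsky); the paper gives no proof of it and only ever invokes the known special cases recorded in Remark~\ref{r:after_conj} (curves, by Amerik; surface endomorphisms, by Xie and Jia--Xie--Zhang; abelian varieties, by Ghioca--Scanlon), using the general conjecture solely as a hypothesis in dimension $\leq d-1$. You correctly recognise that the statement is open and offer a strategy rather than a proof, which is the honest reading; so the comparison here is between your sketch and the state of the art, not between two proofs.

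As a strategy, the sketch has two genuine gaps that you should name more sharply. First, the MRC-fibration induction does not close: even granting a Zariski-dense $\bar f$-orbit on the base $Z$ (which is fine, since $\bk(Z)^{\bar f}\subseteq\bk(X)^f=\bk$), there is no known mechanism for promoting it to a dense orbit in $X$ --- one cannot control the invariant function field of, or the dynamics on, the fibres over a single moving orbit, and this ``relative'' step is precisely where all fibration approaches to the conjecture currently fail; the conjecture is not known to be stable under equivariant fibrations. Second, the $p$-adic arc/Baire-category step is weaker than you suggest: triviality of $\bk(X)^f$ rules out invariant \emph{fibrations} (equivalently, nonconstant invariant rational functions), but it does not prevent an individual forward orbit from lying in a proper closed subset that is not $f$-invariant, and the family of candidate bad subvarieties is uncountable unless one first descends to a countable field, at which point the intersection-of-dense-opens argument is no longer available; bridging this is exactly the content of the hard work in Amerik--Campana and Xie, and it is open beyond the cases cited in the paper. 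Your final paragraph, identifying the rationally connected case (with no extra polarisation or int-amplified structure) as the fundamental obstruction, is consistent with why the paper assumes the conjecture only in low dimension rather than proving it.
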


Note that Conjecture~\ref{Conj:ZDO} with $f$ being int-amplified implies Conjecture~\ref{Conj:PD_int_A} (cf.~Lemmas~\ref{l:int-amp_ZDO} and \ref{l:subvar}).

\begin{remark}\label{r:after_conj}
We recall some known cases of the potential density problem and Conjecture~\ref{Conj:ZDO}.
\begin{enumerate}[leftmargin=2em, ref=(\arabic*)]
	\item \label{r:ur_ab_PD}
	Unirational varieties and abelian varieties over number fields satisfy potential density (cf.~\cite{hassett2003potential}*{Corollary 3.3 and Proposition 4.2}).
	\item \label{r:nonPD} Let $X$ be a variety with a dominant rational map $X\dashrightarrow C$ to a curve of genus $\geq 2$ over a number field.
	Then $X$ does not satisfy potential density (cf.~\cite{faltings1983endlichkeitssatze} and \cite{hassett2003potential}*{Proposition 3.1}).
	\item \label{r:ZDO_curve}
	Conjecture~\ref{Conj:ZDO} holds for any pair $(X, f)$ with $X$ being a curve (cf.~\cite{amerik2011existence}*{Corollary 9}).
	\item \label{r:ZDO_surf}
	Conjecture~\ref{Conj:ZDO} holds for any pair $(X, f)$ with $X$ being a projective surface and $f$ a surjective endomorphism of $X$ (cf.~\cite{xie2019existence}, \cite{jia2020surjective}).
\end{enumerate}
\end{remark}

We first prove Conjecture~\ref{Conj:PD_int_A} for rationally connected varieties in dimension $\leq 3$.

\begin{proposition}\label{t:pd_rc}
Let $X$ be a rationally connected projective variety over $K$.
Suppose that $\dim X \leq 3$ and $X$ admits an int-amplified endomorphism.
Then $X$ satisfies potential density.
\end{proposition}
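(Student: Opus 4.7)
The plan is to split into cases by $\dim X$.

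\emph{Case $\dim X = 1$.} A rationally connected smooth projective curve has genus $0$, hence $X_{\overline{K}}\cong\mathbb{P}^1_{\overline{K}}$; after a finite extension $L/K$, we have $X_L\cong\mathbb{P}^1_L$ and $X_L(L)$ is Zariski dense (this also follows from Remark~\ref{r:after_conj}\ref{r:ur_ab_PD}).

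\emph{Case $\dim X = 2$.} In characteristic zero, a rationally connected smooth projective surface is rational, hence unirational, and potential density follows directly from Remark~\ref{r:after_conj}\ref{r:ur_ab_PD}.

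\emph{Case $\dim X = 3$.} I would invoke the main rationally connected result of the paper announced in the introduction: if $Y$ is a rationally connected projective variety admitting an int-amplified endomorphism $g$ and Conjecture~\ref{Conj:ZDO} holds in dimensions strictly less than $\dim Y$, then there exists a rational curve $C\subset Y$ whose forward orbit $\bigcup_{n\geq 0} g^n(C)$ is Zariski dense in $Y$. Applied to $(X,f)$, this requires only Conjecture~\ref{Conj:ZDO} in dimensions $1$ and $2$, and both are known by Remark~\ref{r:after_conj}\ref{r:ZDO_curve}--\ref{r:ZDO_surf} (the relevant maps arising after descent through the equivariant MMP for int-amplified endomorphisms are again surjective endomorphisms, which is exactly the hypothesis of the surface case). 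Granted such a curve $C$, I would enlarge $K$ to a finite extension $L$ over which $X$, $f$, and $C$ are all defined and over which $C$ acquires an $L$-rational point. Since $C$ is a rational curve, this forces $C_L\cong\mathbb{P}^1_L$ (a smooth genus-$0$ curve with a rational point), and in particular $C(L)$ is Zariski dense in $C$. Because the morphism $f^n$ is continuous in the Zariski topology, $f^n(C(L))$ is Zariski dense in $f^n(C)$ for every $n\geq 0$, so
\[
\bigcup_{n\geq 0} f^n(C(L))\;\subseteq\; X_L(L)
\]
is Zariski dense in $\bigcup_{n\geq 0} f^n(C)$, which is dense in $X$. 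This gives potential density of $X$ over $L$.

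The substantive difficulty is clearly concentrated in the three-dimensional case, and within it the main obstacle is producing the rational curve $C$ with Zariski dense forward orbit; this is where the equivariant minimal model program for int-amplified endomorphisms, combined with the surface case of Conjecture~\ref{Conj:ZDO}, is doing the real work. The one- and two-dimensional cases reduce immediately to the geometric classification of rationally connected varieties, and the passage from a rational curve with dense orbit to densely many rational points of $X$ is the easy Zariski-continuity argument above.
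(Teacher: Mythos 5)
Your proposal follows essentially the same route as the paper: the substantive input is exactly the paper's Lemma~\ref{l:pd_rc}/Corollary~\ref{c:pd_rc} (rational curve with Zariski dense forward orbit, using Conjecture~\ref{Conj:ZDO} in dimensions $1$ and $2$), and the descent-plus-orbit argument at the end is the content of Lemma~\ref{l:subvar}, with your separate treatment of dimensions $1$ and $2$ via unirationality being a harmless shortcut. One small inaccuracy: the rational curve $C$ produced there is an image of $\bP^1$ and may be singular, so after a finite extension you should not claim $C_L\cong\bP^1_L$ but rather use a dominant morphism $\bP^1_L\to C_L$ (i.e.\ unirationality of $C$, as in Lemma~\ref{l:subvar}) to get density of $C(L)$; this is a cosmetic fix, not a gap.
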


Conjecture~\ref{Conj:PD_int_A} also has a positive answer for non-uniruled varieties in any dimension:

\begin{proposition}\label{t:non-ur}
Let $X$ be a non-uniruled projective variety over $K$.
Suppose that $X$ admits an int-amplified endomorphism.
Then $X$ satisfies potential density.
\end{proposition}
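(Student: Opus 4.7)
My plan is to reduce the statement to the known case of abelian varieties (Remark~\ref{r:after_conj}\ref{r:ur_ab_PD}) in two stages: first an equivariant minimal model program to replace $X$ by a model with numerically trivial canonical divisor, then the $Q$-abelian structure theorem for non-uniruled varieties with int-amplified endomorphisms.

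First I would replace the int-amplified endomorphism $f$ by a suitable iterate and run the $f$-equivariant MMP of \cite{meng2020semi}, obtaining a birational map $X\dashrightarrow X'$ to a normal $\bQ$-factorial klt projective variety $X'$ equipped with an induced int-amplified endomorphism $f'$. Non-uniruledness is a birational invariant, so $X'$ is non-uniruled; consequently the MMP cannot terminate at a Mori fiber space, and the output satisfies $K_{X'}$ nef. A standard dynamical argument using $f'$-periodic curves, which are forced to pair to zero with $K_{X'}$ by int-amplifiedness together with nefness, should upgrade this to $K_{X'}\equiv 0$.

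Next I would transfer potential density along the birational map $X\dashrightarrow X'$. Taking a common resolution $p\colon Z\to X$, $q\colon Z\to X'$, any Zariski-dense set of $L$-rational points on $X'$ lifts through $q$ on the open locus where $q$ is an isomorphism to a Zariski-dense set of $L$-rational points on $Z$, which then pushes forward through $p$ to a Zariski-dense set of $L$-rational points on $X$. Hence it suffices to prove potential density for $X'$.

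Finally I would invoke the $Q$-abelian structure theorem for normal projective klt varieties with numerically trivial canonical divisor and an int-amplified endomorphism, providing a finite surjective quasi-\'etale morphism $\pi\colon A\to X'$ from an abelian variety $A$, defined over some finite extension $K'/K$. Since $A_{K'}$ satisfies potential density by Remark~\ref{r:after_conj}\ref{r:ur_ab_PD}, there is a finite extension $L/K'$ with $A(L)$ Zariski dense in $A_L$. Its image $\pi(A(L))\subseteq X'(L)$ is then Zariski dense in $X'_L$ by surjectivity of $\pi$, which concludes the argument. The main obstacle I expect is not in any single step but in marshalling the structure results correctly: one must confirm that the equivariant MMP in the non-uniruled case really produces $K_{X'}\equiv 0$ rather than merely nef, and that the $Q$-abelian conclusion is available at the level of generality of normal klt varieties. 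Both are by now part of the standard toolkit, so I expect the remaining work to be mostly bookkeeping.
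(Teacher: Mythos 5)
Your endgame is sound, and in fact slightly more elementary than the paper's: once you have a finite surjective cover $\pi\colon A\to X'$ from an abelian variety defined over a finite extension, pushing forward a Zariski dense set of rational points of $A$ gives potential density of $X'$ directly, whereas the paper invokes Ghioca--Scanlon to produce a dense $f$-orbit on $A$ and then applies Lemma~\ref{l:subvar}. The transfer of potential density along a birational map is also fine (this is \cite{hassett2003potential}*{Proposition 3.1}).

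The genuine gap is in your first stage. Proposition~\ref{t:non-ur} assumes only that $X$ is a projective variety: it is not assumed normal, let alone $\bQ$-factorial klt. The equivariant MMP of \cite{meng2020semi} that you propose to run requires $\bQ$-factorial klt singularities, and you cannot reduce to that case by an equivariant resolution, because non-invertible endomorphisms do not lift along resolutions of singularities; so the very first step is not available at the stated generality. Moreover, the $Q$-abelian structure theorem you invoke at the end is phrased by you for klt varieties with $K\equiv 0$, i.e.\ exactly the output your inaccessible MMP was supposed to produce, and the upgrade from ``$K_{X'}$ nef'' to ``$K_{X'}\equiv 0$'' is only sketched. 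The paper sidesteps all of this: it first passes to the normalisation (uniruledness and potential density are birational, hence normalisation, invariants, and an int-amplified endomorphism lifts to the normalisation by \cite{meng2020building}*{Lemma 3.5}), and then cites \cite{meng2020building}*{Theorem 1.9}, which states directly that a normal non-uniruled projective variety admitting an int-amplified endomorphism is $Q$-abelian --- no MMP, no klt hypothesis, no intermediate $K\equiv 0$ model. If you replace your first stage by normalisation plus that theorem, the remainder of your argument goes through.
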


With the help of Propositions~\ref{t:pd_rc} and~\ref{t:non-ur}, we are able to show:

\begin{theorem}\label{t:main}
Let $X$ be a normal projective variety over $K$ with at worst $\bQ$-factorial klt singularities.
Suppose that $\dim X\leq 3$ and $X$ admits an int-amplified endomorphism.
Then $X$ satisfies potential density.
\end{theorem}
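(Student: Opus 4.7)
The plan is to induct on $\dim X \leq 3$ and combine the equivariant minimal model program for int-amplified endomorphisms with Propositions~\ref{t:non-ur} and~\ref{t:pd_rc}. The base case $\dim X = 1$ is immediate: an int-amplified endomorphism on a curve forces $X$ to be $\bP^1$ or an elliptic curve, both of which satisfy potential density (cf.~Remark~\ref{r:after_conj}\ref{r:ur_ab_PD}). Suppose now $\dim X \in \{2,3\}$ and fix an int-amplified endomorphism $f$ of $X$. If $X$ is non-uniruled, Proposition~\ref{t:non-ur} applies directly.

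Assume instead that $X$ is uniruled. After replacing $f$ by a positive iterate, the equivariant MMP of \cite{meng2020semi} produces an $f$-equivariant sequence of divisorial contractions and flips $X = X_0 \dashrightarrow \cdots \dashrightarrow X_r$ terminating in a Mori fiber space $\pi\colon Y \coloneqq X_r \to Z$ with $Y$ and $Z$ normal, $\bQ$-factorial and klt. Since potential density is a birational invariant among projective varieties, it suffices to prove the theorem for $Y$. The general fiber of $\pi$ is Fano of dimension $\leq \dim X \leq 3$, hence rationally connected. If $\dim Z = 0$, then $Y$ itself is rationally connected of dimension $\leq 3$ and carries an int-amplified endomorphism, so Proposition~\ref{t:pd_rc} concludes. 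If $\dim Z \geq 1$, then by \cite{meng2020building} the descended endomorphism $g$ on $Z$ is again int-amplified; since $\dim Z < \dim X$ and $Z$ inherits $\bQ$-factorial klt singularities, the induction hypothesis yields potential density of $Z$.

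The remaining — and main — obstacle is to propagate potential density from the base $Z$ to the total space $Y$ of the rationally connected fibration $\pi$. When $\dim Z = 1$, the curve $Z$ is $\bP^1$ or an elliptic curve, and over a suitable finite extension $L/K$ the Graber--Harris--Starr theorem provides a section of $\pi$; combining this section with the deformation theory of combs of free rational curves in rationally connected fibers (in the spirit of Hassett--Tschinkel) and the abundance of $L$-rational points on $Z$, one sweeps out a Zariski dense set of $L$-rational points on $Y$. When $\dim Z = 2$ — which can only occur for $\dim X = 3$ — the fibration $\pi$ is a conic bundle over a $\bQ$-factorial klt surface admitting an int-amplified endomorphism, and a parallel strategy using multi-sections along rational curves contained in $Z$, together with the potential density of $Z$ already established, is expected to yield Zariski density on $Y$. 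The technical heart of the argument lies in implementing these fibration-propagation steps uniformly over the base, leveraging the equivariance provided by the int-amplified endomorphism to arrange that deformations of sections or multi-sections are defined over a single finite extension of $K$.
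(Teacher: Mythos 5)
Your reduction steps (non-uniruled case via Proposition~\ref{t:non-ur}, equivariant MMP from \cite{meng2020semi}, the case $\dim Z=0$ via Proposition~\ref{t:pd_rc}, birational invariance of potential density) all match the paper. The problem is the final step, which you yourself flag as the ``technical heart'': propagating potential density from the base $Z$ of the Mori fibre space to the total space $Y$. This is a genuine gap, not a routine verification. Over a number field, a Zariski dense set of $L$-points on $Z$ does not give points on $Y$, since the rationally connected fibres over those points need not have $L$-points; the Graber--Harris--Starr theorem only produces a section over $\overline{K}$ (hence over some finite extension), and a single section yields points lying on a curve, not a dense set. Making the comb/deformation argument produce densely many points defined over one fixed finite extension is exactly the open part of the Hassett--Tschinkel circle of ideas, and in the case $\dim Z=2$ the situation is worse: a conic bundle over a surface need not have any rational section (Brauer-type obstructions), and a degree-two multisection is a surface whose own potential density is in no way automatic (it could even be of general type). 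So as written the argument does not close.

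The paper sidesteps fibrewise propagation entirely by using the dynamics. Since $\dim Z\le 2$ and the descended endomorphism $g$ is int-amplified, the Zariski dense orbit property (Remark~\ref{r:after_conj}\ref{r:ZDO_curve}, \ref{r:ZDO_surf} together with Lemma~\ref{l:int-amp_ZDO}) gives a \emph{single} point $y\in Z$ with $O_g(y)$ dense; after replacing $y$ by $g^N(y)$ one arranges that the fibre $F=\pi^{-1}(y)$ is a klt Fano variety of dimension $1$ or $2$, hence rational. Then $O_f(F)$ is Zariski dense in the total space, and Lemma~\ref{l:subvar} transports the (potentially dense) rational points of the single rational fibre $F$ along the $f$-orbit to get a dense set of rational points upstairs. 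In other words, the missing idea in your proposal is to use one well-chosen fibre over a point with dense orbit plus Lemma~\ref{l:subvar}, rather than trying to spread rational points over all fibres above $Z(L)$, which is precisely what is not known how to do. If you replace your last paragraph by this orbit argument (and note that in dimension $\le 2$ the statement already follows from the known Zariski dense orbit results, so no induction on potential density of the base is needed), the proof closes and coincides with the paper's.
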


In the last section, we study Question~\ref{q:dr} below, which is also arithmetic in nature, initiated in \cite{kawaguchi2014examples} and further studied in \cite{sano2020zariski} and \cite{sano2021zariski}.

\begin{definition}[cf.~\cite{sano2020zariski}*{Definition 1.4}]
Let $X$ be a projective variety over a number field $K$ and $f\colon X\to X$ a surjective morphism.
We recall the inequality
\[
	\alpha_f(x) \leq d_1(f)
\]
between the arithmetic degree $\alpha_f(x)$ at a point $x \in X(\oK)$ and the first dynamical degree $d_1(f)$ of $f$ (cf.~\ref{notation}\ref{n:d_deg} and \ref{n:a_deg}).
Let $L$ be an intermediate field: $K \subseteq L \subseteq \oK$.
We say that $(X, f)$ has \textit{densely many $L$-rational points with the maximal arithmetic degree} if there is a subset $S\subseteq X(L)$ satisfying the following conditions:
\begin{enumerate}
	\item $S$ is Zariski dense in $X_L$;
	\item the equality $\alpha_f(x)=d_1(f)$ holds for all $x\in S$; and
	\item $O_f(x_1)\cap O_f(x_2)=\emptyset$ for any pair of distinct points $x_1,x_2\in S$.
\end{enumerate}

Following \cite{sano2021zariski}, we introduce the following notation.
We say that $(X,f)$ satisfies $(DR)_L$ if ($X, f)$ has densely many $L$-rational points with the maximal arithmetic degree.
We say that $(X,f)$ satisfies $(DR)$ if there is a finite field extension $K \subseteq L$ ($\subseteq \oK$) such that $(X, f)$ satisfies $(DR)_L$.
\end{definition}

\begin{question}\label{q:dr}
Let $X$ be a projective variety over $K$ and $f\colon X\to X$ a surjective endomorphism.
Assume that $X$ satisfies potential density.
Does $(X, f)$ satisfy $(DR)$?
\end{question}

Question~\ref{q:dr} has a positive answer for smooth projective surfaces when $d_1(f) > 1$ (cf.~\cite{sano2021zariski}*{Theorem 1.5}).
We generalise it to (possibly singular) projective surfaces:

\begin{theorem}\label{t:surf}
Let $X$ be a normal projective surface over $K$ satisfying potential density, and $f \colon X \to X$ a surjective morphism with $d_1(f) > 1$.
Then $(X, f)$ satisfies $(DR)$.
\end{theorem}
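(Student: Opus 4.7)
The plan is to reduce to the smooth case treated in \cite{sano2021zariski}*{Theorem 1.5}. The key preliminary step is to find a smooth projective model to which $f$, or a suitable iterate, lifts as a morphism.

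First I would produce a birational morphism $\pi \colon \tilde X \to X$ with $\tilde X$ smooth projective such that, after replacing $f$ by some iterate, there is a surjective endomorphism $\tilde f \colon \tilde X \to \tilde X$ with $\pi \circ \tilde f = f \circ \pi$. One starts from the minimal resolution of $X$: the induced rational self-map $\pi^{-1} \circ f \circ \pi$ has indeterminacy contained in the exceptional locus $\Exc(\pi)$, and after iteratively resolving indeterminacy (which stabilises by the boundedness of the exceptional graph of a normal surface together with the fact that, up to iteration, $f$ permutes the finitely many exceptional curves) one obtains the desired equivariant smooth model. Passing to an iterate is harmless: both the hypothesis $d_1(f) > 1$ and the conclusion $(DR)$ are stable under iteration, and $d_1(\tilde f) = d_1(f) > 1$ by the birational invariance of dynamical degrees.

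Next I would check that $\tilde X$ inherits potential density from $X$: if $L_0 / K$ is a finite extension such that $X(L_0)$ is Zariski dense in $X_{L_0}$, the subset $X(L_0) \setminus \sing(X)$ remains Zariski dense, and since $\pi$ is an isomorphism over $X \setminus \sing(X)$, these points lift to a Zariski dense subset of $\tilde X(L_0)$. I would then apply \cite{sano2021zariski}*{Theorem 1.5} to the smooth pair $(\tilde X, \tilde f)$, obtaining a finite extension $L / L_0$ and a Zariski dense subset $\tilde S \subseteq \tilde X(L)$ consisting of points with pairwise disjoint $\tilde f$-orbits, each having maximal arithmetic degree $d_1(\tilde f)$. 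Setting $S := \pi(\tilde S \setminus \Exc(\pi)) \subseteq X(L)$, the intertwining $\pi \circ \tilde f = f \circ \pi$ together with the functoriality of Weil heights under the birational morphism $\pi$ yields $\alpha_f(\pi(\tilde x)) = \alpha_{\tilde f}(\tilde x) = d_1(f)$ for each such $\tilde x$; the image $S$ is Zariski dense in $X_L$, and after thinning it to remove any orbit collisions created by $\pi$, we obtain $(DR)_L$.

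The main obstacle is the first step, the construction of an equivariant smooth model. For surfaces this is expected to be tractable: one must control the combinatorics of how the rational lift acts on the exceptional curves of the minimal resolution and show that resolution of indeterminacy terminates. This step is essentially orthogonal to the arithmetic content of the theorem, which is imported directly from \cite{sano2021zariski}.
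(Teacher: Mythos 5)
Your reduction stands or falls on the very first step: the claim that, after iterating $f$, a surjective endomorphism of a normal projective surface lifts to an endomorphism $\tilde f$ of a smooth \emph{birational} model $\tilde X \to X$. This is the genuine gap. Already your starting observation is wrong: the indeterminacy of $\pi^{-1}\circ f\circ\pi$ on the minimal resolution is not contained in $\Exc(\pi)$; it sits over $f^{-1}(\sing X)$, and when $\deg f\geq 2$ a smooth point of $X$ can map to a singular point, so new indeterminacy appears away from the exceptional locus and is not controlled by ``$f$ permutes the exceptional curves.'' Resolving indeterminacy then produces a morphism $W\to\tilde X$ from a \emph{different} surface, and to get a self-map you must lift again over the newly created exceptional curves; nothing in your argument bounds this process, and no such equivariant smooth birational model exists in general. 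Indeed, if it did, the structure theorem of \cite{jia2020surjective} quoted as Proposition~\ref{p:surf_cls} would be essentially superfluous: its case (ii) only provides a lift through a \emph{generically finite} (not birational) cover $V\to X$, and its case (i) — a $\bP^1$-fibration over a curve of genus $\geq 1$ with $f$ descending to a finite-order automorphism — is precisely a situation where no reduction to a smooth model is available.

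This is also where your proposal diverges from what is actually needed arithmetically. The paper handles the automorphism case exactly as you do (equivariant resolution plus \cite{sano2021zariski}*{Theorem 1.5}), but for $\deg f\geq 2$ it splits along Proposition~\ref{p:surf_cls}: case (ii) is transferred to the smooth case through the generically finite cover using Lemma~\ref{l:bir_stable}(1) (Stein factorisation plus \cite{sano2021zariski}*{Lemma 3.2}), case (iii) (rational surfaces) is covered by \cite{sano2020zariski}*{Theorem 1.11}, and case (i) requires a genuinely different argument: after discarding $g(C)\geq 2$ by potential density, one produces a numerically non-zero nef $\bR$-divisor $D$ with $f^*D\sim_{\bR}d_1(f)D$ via \cite{matsuzawa2021invariant}*{Theorem 6.4}, shows $(D\cdot F)>0$ for a general fibre $F$ by the Hodge index theorem, and verifies the criterion \eqref{eq:property} of Lemma~\ref{l:dagger}. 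Your pushing-down of points, heights and orbits along a birational morphism is fine as far as it goes (it is essentially Lemma~\ref{l:bir_stable}(2)), but without a valid substitute for the lifting step — in particular for case (i) — the proof does not go through.
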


The following is an affirmative answer to Question~\ref{q:dr} for int-amplified endomorphisms on rationally connected threefolds.

\begin{theorem}\label{t:rc_dr}
Let $X$ be a rationally connected smooth projective threefold over $K$ and $f\colon X\to X$ an int-amplified endomorphism.
Then $(X, f)$ satisfies $(DR)$.
\end{theorem}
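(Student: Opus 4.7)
Plan: The strategy is to reduce $(X, f)$ via the $f$-equivariant Minimal Model Program of \cite{meng2020semi} to a Mori fiber space, and then to invoke Theorem~\ref{t:surf}, the Kawaguchi--Silverman conjecture for int-amplified endomorphisms (proved in \cite{matsuzawa2019kawaguchi}), and the paper's rational-curve-with-Zariski-dense-orbit result for rationally connected varieties (which is unconditional in dimension three by Remark~\ref{r:after_conj}\ref{r:ZDO_surf}). After using Proposition~\ref{t:pd_rc} to extend $K$ to a finite extension $L$ with $X(L)$ Zariski dense in $X$, and replacing $f$ by a suitable iterate, the equivariant MMP produces an $f$-equivariant birational map $X \dashrightarrow X'$ to a Mori fiber space $\pi\colon X' \to Y'$, with $X'$ a $\bQ$-factorial klt rationally connected threefold and $f$ descending to an int-amplified endomorphism $g$ on $Y'$. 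Since $(DR)$ is preserved on the isomorphism locus of an equivariant birational map (a dense open subset whose complement can be removed from any candidate orbit set), it suffices to establish $(DR)$ for the induced dynamical system $(X', f')$.

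When $\dim Y' \in \{1, 2\}$, the base $Y'$ is $\bP^1$ or a rational surface, and Theorem~\ref{t:surf} (together with the elementary curve case) yields a $(DR)_L$-witness set $S_0 \subseteq Y'(L)$ for $(Y', g)$. Over each $y \in S_0$ the fiber $X'_y$ is a rationally connected variety of dimension at most two. When $d_1(f') = d_1(g)$, the inequality $\alpha_{f'}(x) \ge \alpha_g(\pi(x))$ shows that any $L$-lift $x \in \pi^{-1}(y)$ already has maximal arithmetic degree, while in the case $d_1(f') > d_1(g)$ one further picks fiber points by a relative version of the rational-curve-with-dense-orbit construction applied to the rationally connected general fiber, whose potential density is supplied again by Proposition~\ref{t:pd_rc} in lower dimension. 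Either way, one assembles a $(DR)_L$-witness set on $X'$ after a possible further finite extension of $L$.

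The main obstacle is the Fano case $\dim Y' = 0$, where $X'$ is a $\bQ$-Fano threefold with int-amplified endomorphism $f'$. Here the paper's rational-curve-with-dense-orbit theorem produces a rational curve $C \subset X'$ with $\bigcup_{n \ge 0} f'^n(C)$ Zariski dense in $X'$. Deforming $C$ within the covering family of rational curves on the rationally connected threefold $X'$ should yield a family $\{C_t\}_{t \in T}$ with $\bigcup_t C_t$ dense in $X'$ and whose very general member still has Zariski dense forward $f'$-orbit. One then inductively selects $L$-rational points on distinct members of this family, avoiding both the countable union of proper $f'$-forward-invariant closed subvarieties (each of which meets a generic $C_t$ in only a finite set, since otherwise such a $C_t$ would lie in a proper invariant subvariety contradicting the density of its orbit) and the countably many previously chosen forward orbits. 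This yields a Zariski dense set $S \subseteq X'(L)$ with pairwise disjoint Zariski dense $f'$-orbits. The Kawaguchi--Silverman conjecture of \cite{matsuzawa2019kawaguchi} then gives $\alpha_{f'}(x) = d_1(f')$ for all $x \in S$, and pulling back through the equivariant birational map $X \dashrightarrow X'$ completes $(DR)$ for $(X, f)$. The hardest step is the extraction of a \emph{Zariski dense} witness set in the Fano case: one must promote the single rational curve guaranteed by the paper's theorem to a family whose members still carry dense orbits, and control simultaneously the countably many invariant subvarieties and orbit-disjointness conditions.
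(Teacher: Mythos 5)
Your route has genuine gaps at three points, and the hardest one you flag is in fact the step your argument cannot supply. First, the reduction: the $f$-equivariant MMP of \cite{meng2020semi} involves flips, so $X\dashrightarrow X'$ is a genuine birational map, not a morphism; Lemma~\ref{l:bir_stable} only transfers $(DR)$ along surjective \emph{morphisms}, and the comparison $\alpha_f(x)=\alpha_{f'}(x')$ (and even the well-definedness and density statements) is not available for points whose orbits meet the indeterminacy or flipped loci, so ``pulling back through the equivariant birational map'' is unjustified as stated. Second, in the fibration case with $d_1(f')>d_1(g)$ you give no mechanism certifying $\alpha_{f'}(x)=d_1(f')$ for the chosen fibre points: the fibres are not $f'$-invariant, and the Kawaguchi--Silverman result of \cite{matsuzawa2019kawaguchi} only applies to points with Zariski dense orbit, which your ``relative construction'' does not produce. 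Third, and most seriously, in the Fano case your plan needs densely many $L$-rational points with Zariski dense orbits. Your selection argument does not work: the proper $f'$-invariant closed subvarieties defined over $\oK$ are indeed countably many, and each meets a good member $C_t$ in a finite set, but $C_t(L)\cong\bP^1(L)$ is itself countable, so there is no counting reason why some $L$-point of $C_t$ avoids their union --- this is exactly the Zariski-dense-orbit problem over a number field, open in dimension $3$ (even the geometric statement is only obtained conditionally in Lemma~\ref{l:pd_rc}). The claim that a very general deformation of $C$ retains a dense forward orbit is also unsubstantiated.

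The paper's proof avoids every one of these issues and uses no MMP and no case division. By \cite{matsuzawa2021invariant}*{Theorem 6.4} there is a numerically non-zero nef $\bR$-Cartier divisor $D$ with $f^*D\sim_{\bR}d_1(f)D$; Lemma~\ref{l:pd_rc2} (using that $X$ is of Fano type by \cite{yoshikawa2020structure}, so $D$ is semiample and induces a fibration $\phi$, which is used to force $(D\cdot C)>0$) produces a single rational curve $C$ with $O_f(C)$ Zariski dense and $(D\cdot C)>0$; then the curves $f^n(C)$ verify condition \eqref{eq:property}, and the Sano--Shibata criterion (Lemma~\ref{l:dagger}) manufactures densely many points of maximal arithmetic degree by height-theoretic arguments on these curves --- crucially, without ever needing a single point with Zariski dense orbit. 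If you want to salvage your outline, the missing idea is precisely this substitution of the eigendivisor-plus-curves criterion for the ``densely many dense orbits'' statement your sketch implicitly assumes.
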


\begin{ack}
The first, second and third authors are supported, from NUS, by the President's scholarship, a Research Fellowship and an ARF, respectively.
\end{ack}

\section{Preliminaries}\label{s:preliminary}

\begin{setup}\label{notation}
\textbf{Notation and Terminology}

\begin{enumerate}[leftmargin=2em, ref=(\arabic*)]
	\item Let $K$ be a number field.
	We work over $K$ when considering the potential density.
	We fix an algebraic closure $\oK$ of $K$.
	\item Let $\bk$ be an algebraically closed field of characteristic zero.
	We work over $\bk$ when considering geometric properties.
	\item A \textit{variety} means a geometrically integral separated scheme of finite type over a field.
	\item Let $X$ be a variety over $K$ and $f\colon X\to X$ a morphism (over $K$).
	We denote $X_{\oK}\coloneqq X\times_{\Spec \oK} \Spec K$ and $f_{\oK}\colon X_{\oK}\to X_{\oK}$ the induced morphism (over $\oK$).
	\item The symbol $\sim_{\bR}$ denotes the $\bR$-linear equivalence on Cartier divisors.
	\item We refer to \cite{kollar1998birational} for definitions of $\bQ$-factoriality and klt singularities.
	\item \label{n:ur}
	A variety $X$ of dimension $n$ is \textit{uniruled} if there is a variety $U$ of dimension $n-1$ and a dominant rational map $\bP^1\times U\dashrightarrow X$.
	\item \label{n:rc}
	Let $X$ be a proper variety over a field $k$.
	We say that $X$ is \textit{rationally connected} if there is a family of proper algebraic curves $U\to Y$ whose geometric fibres are irreducible rational curves with cycle morphism $U\to X$ such that $U\times_Y U\to X\times X$ is dominant (cf.~\cite{kollar1996rational}*{\RomanNumeralCaps{4} Definition 3.2}).
	When $k$ is algebraically closed of characteristic zero, if $X$ is rationally connected, then any two closed points of $X$ are connected by an irreducible rational curve over $k$ (by applying \cite{kollar1996rational}*{\RomanNumeralCaps{4} Theorem 3.9} to a resolution of $X$).
	The converse holds when $k$ is also uncountable (cf.~\cite{kollar1996rational}*{\RomanNumeralCaps{4} Proposition 3.6.2}).
	\item A normal projective variety $X$ is said to be \textit{\textit{Q}-abelian} if there is a finite surjective morphism $\pi\colon A \to X$, which is {\'e}tale in codimension $1$, with $A$ being an abelian variety.
	\item For a morphism $f \colon X \to X$ and a point $x \in X$, the forward \textit{$f$-orbit} of $x$ is the set $O_f(x)\coloneqq\{x, f(x), f^2(x), \ldots\}$.
	We denote the Zariski closure of $O_f(x)$ by $Z_f(x)$.

	More generally, for a closed subset $Y \subseteq X$, we denote $O_f(Y)\coloneqq\bigcup_{n=0}^\infty f^n(Y)$ and its Zariski-closure $Z_f(Y)\coloneqq\overline{O_f(Y)}$.
	We say that $O_f(Y)$ is Zariski dense if $Z_f(Y)=X$.
	\item \label{n:int_amp}
	A surjective morphism $f \colon X \to X$ of a projective variety is called \textit{int-amplified} if there exists an ample Cartier divisor $H$ on $X$ such that $f^*H - H$ is ample.
	In particular, polarised endomorphisms are int-amplified.
	\item \label{n:d_deg}
	Let $X$ be a projective variety and $f \colon X \to X$ a surjective morphism.
	The \textit{first dynamical degree} $d_1(f)$ of $f$ is the limit
	\[
		d_1(f)\coloneqq\lim_{n \to \infty} ((f^n)^*H \cdot H^{\dim X-1})^{1/n},
	\]
	where $H$ is an ample Cartier divisor on $X$.
	This limit always converges and is independent of the choice of $H$.
	\item \label{n:a_deg}
	Let $X$ be a projective variety over $K$ and $f \colon X \to X$ a surjective morphism.
	Fix a (logarithmic) height function $h_H \geq 1$ associated to an ample Cartier divisor $H$ on $X$.
	For $x \in X(\overline K)$, the \textit{arithmetic degree} $\alpha_f(x)$ of $f$ at $x$ is the limit
	\[
		\alpha_f(x)\coloneqq\lim_{n \to \infty} h_H(f^n(x))^{1/n}.
	\]
	This limit always converges and is independent of the choices of $H$ and $h_H$ (cf.~\cite{kawaguchi2016dynamical}).
\end{enumerate}
\end{setup}

\begin{lemma}\label{l:int-amp_ZDO}
Let $X$ be a projective variety over $\bk$ and $f\colon X\to X$ an int-amplified endomorphism.
Then $\bk(X)^f=\bk$.
In particular, if Conjecture~\ref{Conj:ZDO} holds for $(X, f)$, then there exists some $x\in X(\bk)$ such that $O_f(x)$ is Zariski dense in $X$.
\end{lemma}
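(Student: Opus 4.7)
The ``in particular'' statement follows immediately once $\bk(X)^f=\bk$ is established, since the hypothesis of Conjecture~\ref{Conj:ZDO} is then satisfied for the morphism $f\colon X\to X$.

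To prove $\bk(X)^f=\bk$, I would argue by contradiction. Suppose there is a non-constant $\phi \in \bk(X)^f$. The plan is to extract from $\phi$ a nonzero class in $N^1(X)_{\bR}$ fixed by $f^*$: this would contradict the characterization of int-amplified endomorphisms as precisely those whose pullback on $N^1(X)_{\bR}$ has every eigenvalue of absolute value strictly greater than $1$ (cf.~\cite{meng2020building}*{Theorem 1.1}).

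The natural candidate is the class of the polar divisor $D \coloneqq (\phi)_\infty$. From $f^*\phi = \phi$ one obtains, by taking principal divisors,
\[
f^*(\phi)_0 - f^*(\phi)_\infty = (\phi)_0 - (\phi)_\infty.
\]
Both sides express the same divisor as a difference of effective divisors with pairwise disjoint supports (the supports of $(\phi)_0$ and $(\phi)_\infty$ are disjoint, and preimages under $f$ of disjoint sets remain disjoint). By the uniqueness of the positive/negative decomposition, this forces $f^*D = D$. Since $\phi$ is non-constant and $X$ is projective, $D$ is effective and nonzero, so $D \cdot H^{\dim X - 1} > 0$ for any ample Cartier class $H$; in particular $[D] \neq 0$ in $N^1(X)_{\bR}$. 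This produces an eigenvector of $f^*$ with eigenvalue $1$, contradicting the assumption that $f$ is int-amplified.

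The main obstacle I expect is that the polar divisor $D$ is a priori only a Weil divisor, whereas the eigenvalue characterization is phrased on Cartier classes in $N^1(X)_{\bR}$. I would handle this by passing to a resolution of indeterminacy $\pi\colon \widetilde X \to X$ of $\phi$, replacing $f$ by an iterate and blowing up further if needed so that $\phi$ becomes a morphism $\widetilde\phi\colon \widetilde X \to \bP^1$ and $f$ lifts to a morphism $\widetilde f\colon \widetilde X \to \widetilde X$ satisfying $\pi\circ\widetilde f = f\circ\pi$ and $\widetilde\phi\circ\widetilde f = \widetilde\phi$. On $\widetilde X$ the polar divisor of $\widetilde\phi$ is genuinely Cartier and fixed by $\widetilde f^*$, and pushing its class forward via $\pi_*$ and applying the projection formula to the commutative square yields the desired nonzero $f^*$-fixed class in $N^1(X)_{\bR}$ (nonzero because the general fiber of $\phi$ is not $\pi$-exceptional).
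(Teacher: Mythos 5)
Your overall strategy (produce a nonzero $f^*$-fixed divisor class and contradict the eigenvalue characterization of int-amplified endomorphisms from \cite{meng2020building}*{Theorem 1.1}) is a legitimate alternative to the paper's argument, but as written it has a genuine gap at the step you yourself flag as the main obstacle. You assert that after ``replacing $f$ by an iterate and blowing up further if needed'' one can resolve the indeterminacy of $\phi$ so that $f$ lifts to a \emph{morphism} $\widetilde f$ on $\widetilde X$. This is not justified and is not true for an arbitrary resolution: an endomorphism of $X$ lifts to a blow-up only when the centres are suitably $f^{-1}$-compatible, and there is no general procedure of ``iterating and blowing up further'' that terminates. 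What makes a lift exist here is precisely the relation $\phi\circ f=\phi$: it implies that the graph $\Gamma\subseteq X\times\bP^1$ of $\phi$ is invariant under $f\times\id$, so $\widetilde f\coloneqq(f\times\id)|_{\Gamma}$ is a morphism with $\pi_1\circ\widetilde f=f\circ\pi_1$ and $\pi_2\circ\widetilde f=\pi_2$, where $\pi_1\colon\Gamma\to X$ is birational and $\pi_2\colon\Gamma\to\bP^1$ plays the role of your $\widetilde\phi$. This graph construction is exactly the paper's starting point; the paper then concludes differently, descending int-amplifiedness along $\pi_2$ to deduce that $\id_{\bP^1}$ is int-amplified (via \cite{meng2020building}*{Lemmas 3.4 and 3.5}), which is absurd.

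The second problem is your final step: pushing the Cartier class forward by $\pi_*$ does not produce a class in $N^1(X)_{\bR}$. The pushforward of a Cartier divisor under a birational morphism is in general only a Weil divisor (indeed it is essentially your original polar divisor $D$ again), so this move reintroduces the Weil-versus-Cartier issue rather than solving it, and Theorem 1.1 cannot be applied on $X$. The repair is to stay upstairs: $\pi_1$ is generically finite and surjective, so $\widetilde f$ is int-amplified by \cite{meng2020building}*{Lemma 3.5}; the divisor $\pi_2^*\{\infty\}$ on $\Gamma$ is Cartier, effective and nonzero (hence numerically nontrivial), and $\widetilde f^*\pi_2^*\{\infty\}=\pi_2^*\{\infty\}$, so $\widetilde f^*$ has eigenvalue $1$ on $N^1(\Gamma)_{\bR}$, contradicting the eigenvalue characterization there (normalize $\Gamma$ first, again using Lemma 3.5, if one wants the characterization on a normal variety; this also sidesteps the issue that on a possibly non-normal $X$ the divisor of $\phi$ is not well defined). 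With these two repairs your numerical argument works, and it becomes a close cousin of the paper's proof, differing only in how the absurdity is extracted from the graph.
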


\begin{proof}
Assume to the contrary that there is a nonconstant rational function $\phi\colon X\dashrightarrow \bP^1$ such that $\phi\circ f=\phi$.
Let $\Gamma$ be the graph of the rational map $\phi\colon X\dashrightarrow \bP^1$ with projections $\pi_1\colon\Gamma\to X$ being birational and $\pi_2\colon\Gamma\to\bP^1$ being surjective.
Then $f$ lifts to an endomorphism $f|_{\Gamma}$ on $\Gamma$ such that $\pi_1\circ f|_{\Gamma}=f\circ\pi_1$ and $\pi_2\circ f|_{\Gamma}=\pi_2$.
It follows from \cite{meng2020building}*{Lemmas 3.4 and 3.5} that $\id \colon \bP^1 \to \bP^1$ is int-amplified, which is absurd.
\end{proof}

\begin{lemma}\label{l:subvar}
Let $X$ be a projective variety over $K$, $f \colon X \to X$ a surjective morphism, and $Z \subseteq X$ a subvariety which satisfies potential density (e.g., $Z$ is an abelian variety or unirational;
see Remark~\ref{r:after_conj}\ref{r:ur_ab_PD}).
If $O_f(Z)$ is Zariski dense, then $X$ satisfies potential density.
\end{lemma}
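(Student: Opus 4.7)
The plan is to take a finite extension $L/K$ over which $Z$ already has a dense set of rational points, and then iterate $f$ to spread those points throughout $X$. First I would invoke the hypothesis that $Z$ satisfies potential density to choose a finite extension $K\subseteq L$ such that $Z(L)$ is Zariski dense in $Z_L$. Since $f$ is a morphism defined over $K\subseteq L$, the set
\[
S \coloneqq \bigcup_{n\geq 0} f^n(Z(L))
\]
lies inside $X(L)$, and it is this set I would propose as the candidate dense subset.

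The heart of the argument is to show that $S$ is Zariski dense in $X_L$. For each $n$, the iterate $f^n_L\colon X_L\to X_L$ is continuous, so combined with the density of $Z(L)$ in $Z_L$ it follows that $f^n(Z(L))$ is Zariski dense in $f^n_L(Z_L) = (f^n(Z))_L$. The latter identification uses that $X$ is projective and $Z$ is closed in $X$, so $f^n(Z)$ is closed, together with the fact that surjectivity of finite-type $K$-morphisms is preserved by base change. Taking the union over $n$, the Zariski closure $\overline{S}$ of $S$ in $X_L$ contains
\[
\bigcup_{n\geq 0} (f^n(Z))_L \;=\; (O_f(Z))_L \;=\; \pi_X^{-1}(O_f(Z)),
\]
where $\pi_X\colon X_L\to X$ is the natural projection. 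Because $L/K$ is a finite separable extension (automatic since $K$ is a number field), $\pi_X$ is faithfully flat and hence open; it follows that $\pi_X^{-1}(O_f(Z))$ is Zariski dense in $X_L$ because $O_f(Z)$ is Zariski dense in $X$ by hypothesis. Combining the two containments forces $\overline{S} = X_L$, and since $S \subseteq X(L)$, this shows that $X_L(L)$ is Zariski dense in $X_L$, so $X$ satisfies potential density.

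I do not anticipate any serious obstacle, since the proof is essentially continuity-and-base-change bookkeeping. The only point requiring a little care is that the image of $f^n_L$ on $Z_L$ really is $(f^n(Z))_L$ and that Zariski closure commutes suitably with the projection $\pi_X$; both are handled by the openness, surjectivity, and faithful flatness of $\pi_X \colon X_L \to X$, all automatic from the finiteness and separability of $L/K$.
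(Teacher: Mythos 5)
Your proposal is correct and follows essentially the same route as the paper: pass to a finite extension $L$ over which $Z(L)$ is Zariski dense, then take the union $\bigcup_{n\geq 0} f^n(Z(L))$, whose closure contains the dense set $O_f(Z)$. The paper states this in two lines, leaving implicit the continuity and base-change bookkeeping that you spell out; there is no substantive difference.
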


\begin{proof}
Replacing $K$ with a finite extension, we may assume that $Z(K)$ is Zariski dense in $Z$.
Then the union $\bigcup_{n=0}^\infty f^n(Z(K))$ is a Zariski dense set of $K$-rational points of $X$.
\end{proof}

\section{Rationally connected varieties: Proof of Proposition~\ref{t:pd_rc}}
\label{s:rc}

\begin{lemma}\label{l:pd_rc}
Let $X$ be a rationally connected projective variety over $\bk$ and of dimension $d \geq 1$, and $f\colon X\to X$ an int-amplified endomorphism.
Assume Conjecture~\ref{Conj:ZDO} in dimension $\leq d-1$.
Then there is a rational curve $C \subseteq X$ such that $O_f(C)$ is Zariski dense.
\end{lemma}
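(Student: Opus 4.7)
My plan is to proceed by induction on $d = \dim X$, combining the $f$-equivariant MMP for int-amplified endomorphisms of \cite{meng2020semi} with the ZDO hypothesis in dimension $\leq d-1$. The base case $d = 1$ is trivial: $X \cong \bP^1$ and one takes $C = X$. For the inductive step, after replacing $f$ by a positive iterate, the equivariant MMP yields an $f$-equivariant birational model $X'$ of $X$ admitting an $f$-equivariant Mori fibre space $\pi \colon X' \to Y$. Since rational connectedness is preserved under birational maps and dominant images, $X'$ is rationally connected and $Y$ is either a point or a rationally connected projective variety of dimension at most $d-1$. A rational curve $C' \subseteq X'$ with $\overline{O_{f'}(C')} = X'$ pulls back to a rational curve $C \subseteq X$ with $\overline{O_f(C)} = X$ (taking the strict transform of $C'$ under the birational map $X \dashrightarrow X'$ after choosing $C'$ to meet the indeterminacy loci in a controlled way), so we may assume $X = X'$ admits the MFS $\pi$.

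If $Y$ is a point, then $X$ has Picard number one, $f$ is in fact polarised, and by the structural results for rationally connected varieties admitting polarised endomorphisms of Picard number one (cf.~\cite{meng2020building}) the geometry of $X$ is severely restricted (e.g., $X \cong \bP^n$ after a finite equivariant cover in many cases). In this case the Zariski dense orbit conjecture for $(X,f)$ itself is known or can be verified directly; a dense-orbit point $x_0 \in X(\bk)$ together with any rational curve $C$ through $x_0$ (existing since $X$ is RC) yields $\overline{O_f(C)} \supseteq \overline{O_f(x_0)} = X$.

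If $\dim Y \geq 1$, the induced endomorphism $g \colon Y \to Y$ is int-amplified, so $\bk(Y)^g = \bk$ by Lemma \ref{l:int-amp_ZDO}. Applying ZDO in dimension $\leq d-1$ to $(Y,g)$ furnishes a point $y_0 \in Y(\bk)$ with $O_g(y_0)$ Zariski dense in $Y$. I would then choose a rational curve $C_Y \subseteq Y$ through $y_0$ (existing since $Y$ is RC), so that $O_g(C_Y)$ is also Zariski dense in $Y$, and lift $C_Y$ to a rational curve $C \subseteq X$ as a rational section of $\pi^{-1}(C_Y) \to C_Y$, which exists by the Graber--Harris--Starr theorem since the general fibre of $\pi$ is rationally connected. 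Then $\pi(O_f(C)) = O_g(C_Y)$ is dense in $Y$, and hence $\overline{O_f(C)}$ dominates $Y$. To conclude $\overline{O_f(C)} = X$, I would argue that if $\overline{O_f(C)} = W \subsetneq X$ were a proper $f$-invariant subvariety dominating $Y$, then $W$ would intersect each periodic fibre of $\pi$ (the periodic points of $g$ being Zariski dense, since $g$ is int-amplified) in a proper invariant subvariety for the induced int-amplified endomorphism on that fibre; applying the inductive hypothesis (or ZDO in the fibre dimension) to a periodic fibre would then force a contradiction.

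The hardest part will be this last density check: the iterates $f^n(C)$ lie above the dense set $O_g(C_Y)$, but they might a priori concentrate in a proper subvariety of each fibre of $\pi$, so the argument that $\overline{O_f(C)} = X$ needs to exploit both the growth of $\deg f^n(C)$ forced by the int-amplified hypothesis and the fibre-wise rational connectedness. Executing this cleanly will likely require a finer choice of $C$ (e.g., making $C$ a very free rational curve with ample normal bundle) and possibly a secondary induction on the fibre dimension of $\pi$.
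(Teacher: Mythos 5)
Your MMP-based induction has two genuine gaps, and both occur exactly where the lemma's hypothesis (ZDO only in dimension $\leq d-1$) bites. First, in the case $\dim Y=0$ you need a Zariski dense orbit on $X$ itself, i.e.\ Conjecture~\ref{Conj:ZDO} in dimension $d$, which you are not allowed to assume; the claim that for a rationally connected variety of Picard number one with a polarised endomorphism the ZDO conjecture ``is known or can be verified directly'' (or that $X\cong\bP^n$ after an equivariant cover) is not a theorem — it is open already for non-split endomorphisms of $\bP^n$, $n\geq 3$, and the structural results in \cite{meng2020building} do not reduce to that case. Second, in the case $\dim Y\geq 1$ the decisive step, namely that $\overline{O_f(C)}=X$ for your lifted section $C$, is not proved: the proposed contradiction — that $W=\overline{O_f(C)}\subsetneq X$ would meet a periodic fibre $F$ in a proper subvariety invariant under the (indeed int-amplified) restriction $f^k|_F$, contradicting ZDO on $F$ — is not a contradiction at all, since int-amplified endomorphisms routinely admit proper invariant subvarieties (e.g.\ coordinate hyperplanes under a power map on $\bP^n$); ZDO asserts the existence of one dense orbit, not the absence of invariant subvarieties. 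Your own closing paragraph concedes this, so as written the argument does not establish the lemma even granting the (fixable, but also nontrivial) transfer of a dense curve-orbit across the birational maps of the MMP.

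The paper's proof avoids the MMP entirely and, in particular, never needs ZDO on $X$ or on a fibre over a point of a dense base orbit. The mechanism is an induction on the dimension of an \emph{irreducible} orbit closure: assuming no dense point-orbit, take $x$ with $Z_f(x)$ irreducible of dimension $r<d$; by \cite{fakhruddin2003questions}*{Theorem 5.1} periodic points are dense, so one picks an $f$-fixed point $y\notin Z_f(x)$ and, using rational connectedness, a rational curve $C$ through $x$ and $y$. If $W:=Z_f(C)\neq X$, a short argument (using that $y=f^n(y)\in f^n(C)$ for all $n$) shows $\dim W>r$; an $f$-periodic component $W'$ of $W$ with $r<\dim W'<d$ is, after iterating $f$, invariant, $f|_{W'}$ is int-amplified by \cite{meng2020building}*{Lemma 2.2}, and ZDO in dimension $\leq d-1$ applied to $(W',f|_{W'})$ (via Lemma~\ref{l:int-amp_ZDO}) produces a new point whose irreducible orbit closure has dimension $>r$. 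Iterating strictly increases $r$, so the process must terminate with $Z_f(C)=X$ for some rational curve $C$. If you want to salvage your approach, you would need precisely an idea of this kind to handle the fibre-direction density and the Picard-number-one case; note that the ``connect to a fixed point outside the orbit closure'' trick is what converts ZDO on proper invariant subvarieties into density for a curve, without ever invoking ZDO in dimension $d$.
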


\begin{proof}
If we have a Zariski dense $f$-orbit $O_f(x)$, take any rational curve $C$ passing through $x$.
Clearly, $O_f(C)$ is Zariski dense.
So we may assume that $f$ has no Zariski dense orbit.

Let $x\in X(\bk)$ be a point such that $Z_f(x)$ is irreducible with dimension $r < d$.
By \cite{fakhruddin2003questions}*{Theorem 5.1}, the subset of $X(\bk)$ consisting of $f$-periodic points is Zariski dense in $X$.
Pick an $f$-periodic point $y\in X(\bk)\setminus Z_f(x)$.
After iterating $f$, we may assume that $y$ is an $f$-fixed point.
Take a rational curve $C\subseteq X$ containing $x$ and $y$.
Set $W\coloneqq Z_f(C)$.
If $W = X$, we are done.
So we may assume that $W\subsetneq X$.
If $\dim W = r$, then $W$ has its irreducible decomposition as $W = Z_f(x)\cup W_1\cup\cdots\cup W_m$.
There is some $n\geq 0$ such that $f^n(x)\in Z_f(x)\setminus\bigcup_{i=1}^m W_i$.
Then $f^n(C)\subseteq W$ but $f^n(C)\not\subseteq\bigcup_{i=1}^m W_i$.
Hence $f^n(C)\subseteq Z_f(x)$.
In particular, $y = f^n(y) \in f^n(C)\subseteq Z_f(x)$, a contradiction.
Thus $r < \dim W$ ($< \dim X = d$).

Now there exists an $f$-periodic irreducible component $W' \subseteq W$ with $r < \dim W' < d$.
Replacing $f$ by a positive power, we may assume that $W'$ is $f$-invariant.
Then $f|_{W'}$ is an int-amplified endomorphism on $W'$ (cf.~\cite{meng2020building}*{Lemma 2.2}).
By assumption, Conjecture~\ref{Conj:ZDO} holds for $(W', f|_{W'})$.
So there exists some $w\in W'(\bk)$ such that $Z_f(w) = Z_{f|_{W'}}(w) = W'$ (cf.~Lemma~\ref{l:int-amp_ZDO}).
In particular, $Z_f(w)$ is irreducible with $\dim Z_f(w) > r$.
Continuing this process, the lemma follows.
\end{proof}

\begin{corollary}\label{c:pd_rc}
Let $X$ be a rationally connected projective variety over $\bk$ and of dimension $\leq 3$, and $f\colon X\to X$ an int-amplified endomorphism.
Then there is a rational curve $C \subseteq X$ such that $O_f(C)$ is Zariski dense.
\end{corollary}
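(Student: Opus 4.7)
The plan is to deduce this directly from Lemma~\ref{l:pd_rc} by verifying that Conjecture~\ref{Conj:ZDO} holds in all the dimensions that appear in the inductive step, for the kind of maps produced there.

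More precisely, write $d = \dim X \leq 3$. Inspection of the proof of Lemma~\ref{l:pd_rc} shows that the Zariski dense orbit conjecture is only invoked for pairs $(W', f|_{W'})$ with $W'$ an $f$-invariant closed subvariety of $X$ of dimension strictly less than $d$, together with the restriction $f|_{W'}$, which is again a \emph{surjective endomorphism} (in fact an int-amplified one, by \cite{meng2020building}*{Lemma 2.2}). Thus I only need Conjecture~\ref{Conj:ZDO} for surjective endomorphisms of projective varieties of dimension $\leq d-1 \leq 2$.

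When $\dim W' = 1$, this is Remark~\ref{r:after_conj}\ref{r:ZDO_curve}. When $\dim W' = 2$, the variety $W'$ is projective and $f|_{W'}$ is a surjective endomorphism of it, so Remark~\ref{r:after_conj}\ref{r:ZDO_surf} applies. Hence Conjecture~\ref{Conj:ZDO} is available in all the cases needed by Lemma~\ref{l:pd_rc}.

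Applying Lemma~\ref{l:pd_rc} to the given $X$ and $f$ therefore produces a rational curve $C \subseteq X$ such that $O_f(C)$ is Zariski dense. There is no real obstacle here: the content of the corollary is that the hypothesis of Lemma~\ref{l:pd_rc} is unconditional in dimension $\leq 3$ for surjective (in particular int-amplified) endomorphisms, which is exactly what the two known cases of the Zariski dense orbit conjecture recorded in Remark~\ref{r:after_conj} provide.
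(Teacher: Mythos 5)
Your proposal is correct and follows the same route as the paper, which proves the corollary by combining Remark~\ref{r:after_conj}\ref{r:ZDO_curve}, \ref{r:ZDO_surf} with Lemma~\ref{l:pd_rc}. Your extra observation that the lemma only invokes Conjecture~\ref{Conj:ZDO} for surjective (int-amplified) endomorphisms of projective subvarieties, so the surface case known only for endomorphisms suffices, is exactly the point the paper relies on implicitly.
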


\begin{proof}
This follows from Remark~\ref{r:after_conj}\ref{r:ZDO_curve}, \ref{r:ZDO_surf}, and Lemma~\ref{l:pd_rc}.
\end{proof}

\begin{proof}[Proof of Proposition~\ref{t:pd_rc}]
By applying Corollary~\ref{c:pd_rc} to $(X_{\oK},f_{\oK})$, we know that there is a rational curve $C\subseteq X_{\oK}$ such that $O_{f_{\oK}}(C)$ is Zariski dense.
Replacing $K$ with a finite extension, we may assume that $C$ is defined over $K$.
Then $O_f(C)$ is Zariski dense in $X$.
The theorem follows from Lemma~\ref{l:subvar}.
\end{proof}

\section{Int-amplified endomorphisms: Proofs of Proposition~\ref{t:non-ur} and Theorem~\ref{t:main}}
\label{s:main}

\begin{lemma}(cf.~\cite{meng2020building}*{Theorem 1.9})
\label{l:non-ur}
Let $X$ be a normal projective variety over $\bk$ and $f \colon X \to X$ an int-amplified endomorphism.
Assume one of the following conditions.
\begin{enumerate}
	\item $X$ is non-uniruled.
	\item $X$ has at worst $\bQ$-factorial klt singularities, and $K_X$ is pseudo-effective.
\end{enumerate}
Then $X$ is a \textit{Q}-abelian variety.
In particular, $f$ has a Zariski dense orbit.
\end{lemma}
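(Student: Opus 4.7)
The plan is to combine the structural classification of \cite{meng2020building}*{Theorem~1.9} with the known validity of the Zariski dense orbit conjecture on abelian varieties. Under either hypothesis~(1) or~(2), the cited theorem furnishes the Q-abelian structure directly: there exists a finite surjective morphism $\pi\colon A \to X$, \'etale in codimension~$1$, with $A$ an abelian variety. (In case~(1) one uses en route that a non-uniruled normal projective variety has pseudo-effective canonical class by Boucksom--Demailly--P\u{a}un--Peternell, which feeds into the proof of the cited theorem; in case~(2) the hypotheses are exactly those of loc.~cit.) This already settles the first assertion of the lemma.

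With the Q-abelian structure in hand, I would next lift the dynamics to $A$. After replacing $f$ by a suitable positive iterate, there is a surjective endomorphism $\tilde f \colon A \to A$ with $\pi\circ\tilde f = f\circ\pi$; this is the standard lifting for endomorphisms along finite covers of Q-abelian varieties, using that $\pi$ is \'etale in codimension~$1$ and the Galois structure of such covers (concretely, one takes the normalisation of $A$ in the appropriate function field and observes that after enough iteration the resulting cover is identified with $A$ itself). The lifted $\tilde f$ is again int-amplified: if $H$ is an ample Cartier divisor on $X$ with $f^*H - H$ ample, then the pullback $\pi^*H$ is ample on $A$ (pullback by a finite morphism preserves ampleness), and
\[
	\tilde f^*(\pi^*H) - \pi^*H = \pi^*(f^*H - H)
\]
is again ample.

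For the Zariski-dense-orbit assertion, since $\tilde f$ is int-amplified, Lemma~\ref{l:int-amp_ZDO} gives $\bk(A)^{\tilde f}=\bk$, and Conjecture~\ref{Conj:ZDO} is known for surjective endomorphisms of abelian varieties. Hence there exists $a \in A(\bk)$ with Zariski dense $\tilde f$-orbit. The image $\pi(a) \in X(\bk)$ then has Zariski dense $f$-orbit: indeed, $\pi(O_{\tilde f}(a)) = O_f(\pi(a))$ by the intertwining relation, and $\pi$ is a surjective closed map, so the Zariski closure of $O_f(\pi(a))$ is $\pi(\overline{O_{\tilde f}(a)}) = \pi(A) = X$.

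The main obstacle is the lifting step together with a clean invocation of ZDO on abelian varieties. The lifting requires one to choose an iterate that is compatible with the Galois structure of $\pi$, which is routine but slightly delicate. For ZDO on abelian varieties, if a direct citation in the exact required generality is inconvenient, one could alternatively deduce it from Fakhruddin's theorem on density of periodic points together with an analysis of the $\bZ[\tilde f]$-submodule structure of $A(\bk)$, or, when $\bk$ is uncountable, from a standard Baire-category argument ruling out containment in any proper subvariety.
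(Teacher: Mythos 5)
Your argument is correct and follows essentially the same route as the paper: invoke \cite{meng2020building}*{Theorem 1.9} for the $Q$-abelian structure, lift $f$ to an int-amplified endomorphism of the abelian cover (the paper cites \cite{nakayama2010polarized}*{Lemma 2.12} and \cite{meng2020building}*{Lemma 3.5} for the lifting you sketch), apply the known Zariski dense orbit property for endomorphisms of abelian varieties (\cite{ghioca2017density}) via Lemma~\ref{l:int-amp_ZDO}, and push the dense orbit down by the finite surjection.
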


\begin{proof}
The first claim is \cite{meng2020building}*{Theorem 1.9}.
Now there is a finite cover $\pi\colon A\to X$ ({\'e}tale in codimension $1$) from an abelian variety $A$ with $f$ lifted to an int-amplified endomorphism $g$ on $A$ (cf.~\cite{nakayama2010polarized}*{Lemma 2.12} and \cite{meng2020building}*{Lemma 3.5}).
Since Conjecture~\ref{Conj:ZDO} holds for endomorphisms on abelian varieties (cf.~\cite{ghioca2017density}), $g$ has a Zariski dense orbit $O_g(a)$ for some $a\in A(\bk)$ (cf.~Lemma~\ref{l:int-amp_ZDO}).
Then $O_f(\pi(a))$ is a Zariski dense orbit of $f$.
\end{proof}

\begin{lemma}\label{l:main}
Let $X$ be a normal projective variety over $\bk$ and of dimension $\leq 3$ with at worst $\bQ$-factorial klt singularities.
Let $f \colon X \to X$ be an int-amplified endomorphism.
Then there exists a rational subvariety $Z \subseteq X$ of dimension $\geq 0$, such that $O_f(Z)$ is Zariski dense.
\end{lemma}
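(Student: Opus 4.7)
The plan is to split into cases based on whether $X$ is uniruled, and to apply the equivariant MMP in the main case $\dim X = 3$ with $X$ uniruled; in every subcase I would locate a rational subvariety on the MMP target whose forward orbit is Zariski dense, and then transport it back to $X$ via the strict transform.

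For the easy cases, if $X$ is non-uniruled then $K_X$ is pseudo-effective (as $X$ is $\bQ$-factorial klt), so Lemma~\ref{l:non-ur} gives that $X$ is \textit{Q}-abelian and $f$ has a Zariski dense orbit; take $Z$ to be a point in this orbit. If $X$ is uniruled but $\dim X \leq 2$, then Remark~\ref{r:after_conj}\ref{r:ZDO_curve}, \ref{r:ZDO_surf} combined with Lemma~\ref{l:int-amp_ZDO} again yields a Zariski dense $f$-orbit, and $Z$ is again a single point.

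Suppose now $\dim X = 3$ with $X$ uniruled. After iterating $f$, I would run the equivariant MMP for $\bQ$-factorial klt threefolds with int-amplified endomorphisms (due to Meng--Zhang) to obtain an $f$-equivariant birational map $\phi \colon X \dashrightarrow X_k$ such that $X_k$ carries a Mori fiber space $\pi \colon X_k \to Y$, and the induced endomorphisms $f_k$ on $X_k$ and $g$ on $Y$ are both int-amplified. I split by $\dim Y \in \{0, 1, 2\}$. When $\dim Y = 0$, $X_k$ is a Fano threefold, hence rationally connected, so Corollary~\ref{c:pd_rc} supplies a rational curve $C \subseteq X_k$ with $O_{f_k}(C)$ Zariski dense. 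When $\dim Y = 1$, either $Y \cong \bP^1$ and $X_k$ is again rationally connected (so Corollary~\ref{c:pd_rc} applies), or $Y$ is elliptic, in which case Lemma~\ref{l:non-ur} produces $y_0 \in Y$ with $O_g(y_0)$ Zariski dense; after replacing $y_0$ by a sufficiently large iterate $g^m(y_0)$ so that it lies in the smooth locus of $\pi$, the fiber $C \coloneqq \pi^{-1}(y_0)$ is a smooth del Pezzo surface, hence rational, and $O_{f_k}(C) = \pi^{-1}(O_g(y_0))$ is Zariski dense in $X_k$. When $\dim Y = 2$, general fibers of $\pi$ are $\bP^1$, and Remark~\ref{r:after_conj}\ref{r:ZDO_surf} gives $y_0 \in Y$ with $O_g(y_0)$ dense (again arranged to lie in the smooth locus of $\pi$), so $C \coloneqq \pi^{-1}(y_0) \cong \bP^1$ is a rational curve with $O_{f_k}(C)$ Zariski dense. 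In every subcase, after a further iterate $C \mapsto f_k^n(C)$ to move $C$ off the exceptional locus of $\phi^{-1}$, I define $Z$ to be the strict transform of $C$ in $X$; then $Z$ is rational (birational to $C$ via $\phi$), and $\phi$-equivariance yields $\phi(O_f(Z)) = O_{f_k}(C)$, so $O_f(Z)$ is Zariski dense in $X$.

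I expect the principal obstacle to be the Mori fiber space case with elliptic base: one must simultaneously arrange that $y_0$ has Zariski dense $g$-orbit \emph{and} lies in the smooth locus of $\pi$. This is handled by the observation that, given any dense orbit produced by Lemma~\ref{l:non-ur}, one may replace its initial point by a later iterate, which still has Zariski dense orbit (removing finitely many points from a dense set preserves density) and can be chosen outside any prescribed proper closed subset of $Y$. A secondary technical ingredient is that int-amplification is preserved under both the birational MMP steps and the Mori fiber space descent; both are provided by the Meng--Zhang framework.
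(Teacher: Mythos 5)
Your proposal is correct and takes essentially the same route as the paper: settle the non-uniruled and low-dimensional cases via dense orbits, run the $f$-equivariant MMP, and take the fibre of the resulting Mori fibre space over a point of a dense orbit on the base, that fibre being rational --- the paper merely organizes the cases differently (it first rules out rationally connected $X$ via Corollary~\ref{c:pd_rc}, so that $\dim Y=0$ yields a contradiction, whereas you apply Corollary~\ref{c:pd_rc} to the MMP output in the Fano and $Y\cong\bP^1$ subcases), and it leaves the transport back to $X$ implicit where you spell it out with strict transforms. The one small inaccuracy is calling the fibre over an elliptic base a smooth del Pezzo surface: $X_k$ may well be singular, so the general fibre is only a klt del Pezzo (klt Fano) surface, but such a surface is still rational, so your conclusion is unaffected.
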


\begin{proof}
By Remark~\ref{r:after_conj}\ref{r:ZDO_curve}, \ref{r:ZDO_surf}, and Lemma~\ref{l:int-amp_ZDO}, the assertion holds when $\dim X\leq 2$.
Then by Corollary~\ref{c:pd_rc} and Lemma~\ref{l:non-ur}, we may assume that $X$ is a threefold, which is uniruled but not rationally connected, and $K_X$ is not pseudo-effective.

By \cite{meng2020semi}, replacing $f$ with an iteration, we can run an $f$-equivariant minimal model program:
\[
\xymatrix{
	X=X_0 \ar@{-->}[r]^-{\mu_0} &X_1 \ar@{-->}[r]^-{\mu_1} &\cdots \ar@{-->}[r]^-{\mu_{m-1}} &X_m=X' \ar[r]^-{\pi} &Y,
}
\]
where each $\mu_i$ is a birational map and $\pi$ is a Mori fibre space with $\dim Y < \dim X' = 3$.
If $\dim Y = 0$, then $X'$ is klt Fano.
Hence $X'$ and $X$ are rationally connected (cf.~\cite{zhang2006rational}*{Theorem 1}), contradicting our extra assumption.
Thus $\dim Y = 1, 2$.
Since $\dim Y \leq 2$, the int-amplified endomorphism $g\coloneqq f|_Y$ has a Zariski dense orbit $O_g(y)$ by Remark~\ref{r:after_conj} \ref{r:ZDO_curve}, \ref{r:ZDO_surf} and Lemma~\ref{l:int-amp_ZDO} (cf.~\cite{meng2020building}*{Lemmas 3.4 and 3.5}).
Replacing $y$ by $g^N(y)$ for a suitable $N \geq 0$, we may assume that $F\coloneqq \pi^{-1}(y)$ is a klt Fano variety of dimension equal to $\dim X - \dim Y \in\{1, 2\}$, and hence a rational variety.
Clearly, $O_f(F)$ is Zariski dense in $X$ by construction.
\end{proof}

\begin{proof}[Proof of Proposition~\ref{t:non-ur}]
Since being uniruled and the potential density are birational properties (cf.~\ref{notation}\ref{n:ur} and \cite{hassett2003potential}*{Proposition 3.1}), they are invariant under the normalisation map.
Also, since an int-amplified endomorphism on the variety $X$ lifts to an int-amplified endomorphism on its normalisation (cf.~\cite{meng2020building}*{Lemma 3.5}), we may assume that $X$ is normal.
Then the proposition follows from Lemmas~\ref{l:non-ur} and \ref{l:subvar}.
\end{proof}

\begin{proof}[Proof of Theorem~\ref{t:main}]
This follows from Lemmas~\ref{l:main} and \ref{l:subvar}.
\end{proof}

\section{The maximal arithmetic degree: Proofs of Theorems~\ref{t:surf} and \ref{t:rc_dr}}
\label{s:dr}

In this section, we study Question~\ref{q:dr}.
First, we prove Theorem~\ref{t:surf}.

\begin{lemma}\label{l:bir_stable}
Let $X, Y$ be normal projective varieties over $K$, and $f\colon X\to X$ and $g\colon Y\to Y$ surjective endomorphisms.
Assume that there is a surjective morphism $\pi\colon X\to Y$ such that $\pi\circ f = g\circ\pi$. Then:
\begin{enumerate}
	\item If $\pi$ is generically finite and $(X, f)$ satisfies $(DR)$, then $(Y, g)$ also satisfies $(DR)$.
	\item If $\pi$ is birational, then $(X, f)$ satisfies $(DR)$ if and only if so does $(Y, g)$.
\end{enumerate}
\end{lemma}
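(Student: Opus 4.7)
The plan is to reduce (2) to (1) plus a short direct argument, so that all the real work goes into proving (1). For the reverse direction of (2), let $T\subseteq Y(L)$ witness $(DR)_L$ for $(Y,g)$, and let $U\subseteq Y$ be the dense open isomorphism locus of the birational morphism $\pi$. The intersection $T\cap U(L)$ remains Zariski dense in $Y_L$ (if not, $T$ would lie in the proper closed set $W\cup(Y_L\setminus U)$), and lifting it through the inverse of $\pi|_{\pi^{-1}(U)}$ gives a Zariski dense set $S\coloneqq\pi^{-1}(T\cap U(L))\subseteq X(L)$. Since $\pi$ is a birational semi-conjugacy, heights and both degrees transfer, giving $\alpha_f(x)=\alpha_g(\pi(x))=d_1(g)=d_1(f)$ for $x\in S$. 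Orbit disjointness transfers under $\pi$: any overlap $O_f(x_1)\cap O_f(x_2)\neq\emptyset$ would project, via the identity $\pi(O_f(x_i))=O_g(\pi(x_i))$, to an overlap $O_g(\pi(x_1))\cap O_g(\pi(x_2))\neq\emptyset$, contradicting the assumption on $T$. The forward direction of (2) is a special case of (1).

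For (1), I would take a witness $S\subseteq X(L)$ of $(DR)_L$ for $(X,f)$ and set $T_0\coloneqq\pi(S)$. Properness and surjectivity of $\pi$ make $T_0$ Zariski dense in $Y_L$. As $\pi$ is generically finite, $d_1(f)=d_1(g)$; and after discarding from $S$ the proper closed exceptional locus of $\pi$ (which preserves Zariski density), the standard height comparison $h_{\pi^{*}H_Y}=h_{H_Y}\circ\pi+O(1)$ yields $\alpha_g(y)=\alpha_f(x)=d_1(g)$ for every $y=\pi(x)\in T_0$. So $T_0$ already fulfills the Zariski density and maximal arithmetic degree conditions of $(DR)_L$ for $(Y,g)$; only the orbit disjointness condition is potentially violated.

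To extract a subset $T\subseteq T_0$ with pairwise disjoint $g$-orbits while keeping it Zariski dense, I would introduce the grand-orbit equivalence on $Y(L)$: $y_1\sim y_2$ if and only if $O_g(y_1)\cap O_g(y_2)\neq\emptyset$. Each $\sim$-class is countable. Using that $Y_L$ has only countably many proper closed subvarieties defined over $L$ (via the countability of $L$-points of the Hilbert scheme of $Y_L$), enumerate them as $W_1,W_2,\ldots$ and inductively pick at most one representative per $\sim$-class: at stage $n$, ensure that some previously or newly chosen representative lies outside $W_n$, either by reusing an earlier point or by assigning a fresh $\sim$-class having a point outside $W_n$. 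Feasibility rests on the Zariski density of $T_0\setminus W_n$: either infinitely many distinct $\sim$-classes meet $Y_L\setminus W_n$, supplying new candidates at every stage, or some single $\sim$-class $C\subseteq T_0$ is itself Zariski dense in $Y_L$, in which case its representative can be chosen once and for all in the complement of the countable union of the relevant ``difficult'' $W_n$.

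The main obstacle is precisely the density-preservation in this orbit-thinning step of (1): the constraint of one representative per $\sim$-class is delicate in the borderline case where a grand orbit is already Zariski dense, where one must balance a single representative against countably many avoidance constraints. A careful bookkeeping, possibly along the lines of arguments in Sano--Shibata, should resolve this case and complete the proof.
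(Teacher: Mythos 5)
Your backward direction of (2) (lifting a witness set through the isomorphism locus of the birational morphism and projecting orbits to check disjointness) is correct and close in spirit to what the paper does, and reducing the forward direction of (2) to (1) is legitimate. The problems are in your proof of (1). A first gap: the claim that discarding points of $S$ on the exceptional locus plus the comparison $h_{\pi^*H_Y}=h_{H_Y}\circ\pi+O(1)$ yields $\alpha_g(\pi(x))=\alpha_f(x)$ is not justified. When $\pi$ is generically finite but not finite, $\pi^*H_Y$ is nef and big but not ample, so $h_{\pi^*H_Y}$ does not dominate a multiple of $h_{H_X}$, and the obstruction concerns the whole orbit (which may keep approaching the contracted locus), not the initial point. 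The inequality you get for free is $\alpha_g(\pi(x))\leq\alpha_f(x)$; the one you need is the reverse. This equality for generically finite equivariant morphisms is a genuine result, which the paper imports from \cite{silverman2017arithmetic}*{Lemma 3.2} and \cite{matsuzawa2020nondensity}*{Lemma 2.8} rather than deriving from functoriality of heights.

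The decisive gap is the orbit-thinning step, which you yourself flag. Since at most one representative per grand-orbit class may be kept, your selection can only be dense if infinitely many classes meet $T_0$ densely. But through the positive-dimensional fibres of $\pi$, infinitely many pairwise disjoint $f$-orbits can map into a single grand $g$-orbit: the preimage of one grand orbit is a countable union of fibres and can perfectly well contain a Zariski dense set of $L$-points, so $T_0$ may consist of very few classes, in which case no sub-selection of $T_0$ is dense and the whole strategy of pushing forward an arbitrary witness set and repairing afterwards breaks down. Your dichotomy does not fix this: in the branch ``some single $\sim$-class $C$ is Zariski dense,'' you are still allowed only one point of $C$, and a single point is never Zariski dense. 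The paper avoids the problem instead of repairing it: it Stein-factors $\pi$ into a birational morphism $\pi'$ with connected fibres followed by a finite morphism $\varphi$; for $\pi'$ it proves that the union $E$ of positive-dimensional fibres satisfies $f^{-1}(E)\subseteq E$, so one may choose the witness set inside $X(L)\setminus E$, whose orbits then stay in $X\setminus E$ where $\pi'$ is injective --- orbit-disjointness descends with no thinning at all; the finite part $\varphi$, where a single grand $g$-orbit can absorb at most $\deg\varphi$ disjoint orbits, is delegated to \cite{sano2021zariski}*{Lemma 3.2}. Some such adapted choice of the witness set (and an invariance statement like $f^{-1}(E)\subseteq E$) is exactly what your argument is missing.
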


\begin{proof}
Assume first that $\pi$ is generically finite.
Let $X \xrightarrow{\pi'} X' \xrightarrow{\varphi} Y$ be the Stein factorisation of $\pi$, where $\pi'$ is a projective morphism with connected fibres (indeed, $\pi'_*\cO_X\simeq\cO_{X'}$) to a normal variety $X'$, and $\varphi$ is a finite morphism (cf.~\cite{hartshorne1977algebraic}*{Corollary 11.5}).
Since $\pi\circ f = g\circ\pi$ and $\varphi$ is finite, we see that $\pi'\circ f$ contracts every fibre of $\pi'$.
By the rigidity lemma (cf.~\cite{debarre2001higher}*{Lemma 1.15}), there is a morphism $f'\colon X'\to X'$ such that $\pi'\circ f = f'\circ\pi'$ and $\varphi\circ f' = g\circ\pi$.
By \cite{sano2021zariski}*{Lemma 3.2}, for (1), we only need to show that $(X', f')$ satisfies $(DR)$, which can be deduced from (2);
for (2), we only need to show that if $(X, f)$ satisfies $(DR)$, then so does $(Y, g)$.

Let $\Sigma\subseteq Y$ be the subset consisting of points $y$ such that $\dim\pi^{-1}(y) > 0$, and $E\coloneqq\pi^{-1}(\Sigma)\subseteq X$, which is a closed proper subset.
Since $\pi$ has connected fibres by Zariski's Main Theorem (cf.~\cite{hartshorne1977algebraic}*{Corollary 11.4}), $\pi|_{X\setminus E}\colon X\setminus E\to Y\setminus\Sigma$ is an isomorphism.
Since $g$ is finite, both $\Sigma$ and $Y\setminus \Sigma$ are $g^{-1}$-invariant.
There is an induced surjective morphism $f|_{X\setminus E}\colon X\setminus E\to X\setminus E$ such that $\pi|_{X\setminus E}\circ f|_{X\setminus E}=g|_{Y\setminus\Sigma}\circ\pi|_{X\setminus E}$.
Let $L$ be a finite field extension of $K$ such that $(X,f)$ satisfies $(DR)_L$.
Then there exists a sequence of $L$-rational points $S_X=\{x_i\}_{i=1}^{\infty}\subseteq X(L)\setminus E$ such that
\begin{itemize}
	\item $S_X$ is Zariski dense in $X_L$;
	\item $\alpha_f(x_i)=d_1(f)$ for all $i$; and
	\item $O_f(x_i)\cap O_f(x_j)=\emptyset$ for $i\neq j$.
\end{itemize}
Thus $y_i\coloneqq \pi(x_i)$ is well-defined and $S_Y\coloneqq \{y_i\}_{i=1}^{\infty}$ satisfies the conditions of $(DR)_L$ for $(Y, g)$;
note that $d_1(f)=d_1(g)$ and $\alpha_f(x_i)=\alpha_g(y_i)$ (cf.~\cite{silverman2017arithmetic}*{Lemma 3.2} in the smooth case, or \cite{matsuzawa2020nondensity}*{Lemma 2.8} in general).
\end{proof}

We need the following from \cite{sano2020zariski}.

\begin{lemma}[cf.~{\cite{sano2020zariski}*{Theorem 4.1}}]
\label{l:dagger}
Let $X$ be a projective variety over $K$ and $f \colon X \to X$ a surjective morphism with $d_1(f)>1$.
Assume the following condition:
\begin{align}\label{eq:property}
	\tag{$\dagger$}
	&\text{There is a numerically non-zero nef $\bR$-Cartier divisor $D$ on $X$ such that}\\[-0.3em]
	&\text{$f^*D \sim_{\bR} d_1(f)D$, and for any proper closed subset $Y \subseteq X_{\oK}$, there exists}\notag\\[-0.3em]
	&\text{a morphism $g\colon \bP^1_K \to X$ such that $g(\bP^1_K) \not\subseteq Y$ and $g^*D$ is ample.}\notag
\end{align}
Then $(X, f)$ satisfies $(DR)_K$.
\end{lemma}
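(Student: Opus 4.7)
The plan is to follow the canonical-height strategy of \cite{sano2020zariski}. Since $d_1(f)>1$ and $f^{*}D\sim_{\bR}d_1(f)D$, the Tate telescoping limit
\[
	\hat{h}_D(x) \coloneqq \lim_{n\to\infty} d_1(f)^{-n}\, h_D(f^n(x))
\]
converges on $X(\oK)$, satisfies $\hat{h}_D\circ f = d_1(f)\,\hat{h}_D$, and agrees with $h_D$ up to a bounded error. The key consequence is that whenever $\hat{h}_D(x) > 0$, one has $\alpha_f(x) = d_1(f)$: choosing $c > 0$ so that $cH \pm D$ are both ample yields $|h_D| \leq c\,h_H + O(1)$, and then
\[
	h_H(f^n(x)) \geq c^{-1}d_1(f)^{n}\,\hat{h}_D(x) - O(1),
\]
whose $n$-th root forces $\alpha_f(x) \geq d_1(f)$, matching the general upper bound $\alpha_f(x) \leq d_1(f)$.

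To produce densely many such $K$-rational points, I would use that $X$, being of finite type over the countable field $K$, admits a countable basis $\{U_m\}_{m\geq 1}$ of nonempty Zariski open subsets, and then build $S = \{x_n\}_{n \geq 1} \subseteq X(K)$ inductively. At stage $n$ the set
\[
	W_n \coloneqq (X_{\oK}\setminus U_n)\cup\bigcup_{j<n} Z_f(x_j)
\]
is a proper closed subset of $X_{\oK}$, and applying $(\dagger)$ to $W_n$ yields a morphism $g_n\colon \bP^1_K\to X$ with $g_n(\bP^1_K)\not\subseteq W_n$ and $g_n^{*}D$ ample. Then $g_n^{-1}(W_n)$ is a proper, hence finite, closed subset of $\bP^1_K$; and Northcott's theorem applied with the ample height $h_{g_n^{*}D}$ supplies infinitely many $p \in \bP^1(K)$ with $h_{g_n^{*}D}(p)$ arbitrarily large. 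Picking $p_n$ with $g_n(p_n)\notin W_n$ and $h_{g_n^{*}D}(p_n)$ sufficiently large, and setting $x_n \coloneqq g_n(p_n)$, the identity $\hat{h}_D\circ g_n = h_{g_n^{*}D} + O(1)$ makes $\hat{h}_D(x_n) > 0$, hence $\alpha_f(x_n) = d_1(f)$, while $x_n \in U_n$ ensures that $\{x_n\}$ meets every basic open subset and is therefore Zariski dense in $X$.

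For the orbit-disjointness condition $O_f(x_n)\cap O_f(x_j)=\emptyset$, note that any identity $f^k(x_n) = f^l(x_j)$ forces $\hat{h}_D(x_n) = d_1(f)^{l-k}\,\hat{h}_D(x_j)$, so it suffices to additionally arrange $\hat{h}_D(x_n)$ to avoid the countable forbidden set $\{d_1(f)^s\,\hat{h}_D(x_j) : s\in\bZ,\ j<n\}$. Because $d_1(f) > 1$, consecutive forbidden values are separated multiplicatively, so in any logarithmic-height window $[M, d_1(f)^{1/2}M]$ there are at most $n-1$ forbidden values. A Northcott/Schanuel count of $K$-rational points of $\bP^1_K$ lying in that window then exhibits, for $M$ large enough, more admissible candidates for $p_n$ than there are forbidden ones, completing the inductive step.

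The main obstacle is precisely this simultaneous control in the inductive step: one must land $x_n$ in the prescribed basic open $U_n$, avoid every prior orbit closure $Z_f(x_j)$, secure $\hat{h}_D(x_n) > 0$, \emph{and} keep $\hat{h}_D(x_n)$ off a countable forbidden list. Hypothesis $(\dagger)$ is tailored exactly for this: it accommodates any proper closed avoidance locus, and the ampleness of $g_n^{*}D$ together with the strict inequality $d_1(f) > 1$ is what makes the Northcott count of good $K$-points on $\bP^1_K$ outweigh the sparse forbidden heights at each scale.
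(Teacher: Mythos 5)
The paper gives no independent argument for this lemma: it is quoted directly from \cite{sano2020zariski}*{Theorem 4.1}, and your reconstruction follows essentially the same canonical-height route used there (the Call--Silverman limit $\hat{h}_D$, positivity of $\hat{h}_D$ forcing $\alpha_f=d_1(f)$, and rational curves supplied by \eqref{eq:property} together with Northcott to produce densely many $K$-points with pairwise disjoint orbits). Two small repairs to your inductive step are needed. First, including $\bigcup_{j<n}Z_f(x_j)$ in $W_n$ is both unnecessary and potentially illegitimate: if some $x_j$ happens to have a dense orbit, then $W_n$ is not a proper closed subset and \eqref{eq:property} cannot be invoked; since orbit-disjointness is in any case enforced through your forbidden set of canonical-height values, these terms should simply be dropped and $W_n$ taken to be the pullback of $X\setminus U_n$ alone. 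Second, in the final counting step you compare the number of candidate points in the window $[M,d_1(f)^{1/2}M]$ with the number (at most $n-1$) of forbidden values, but these are different kinds of objects: a priori all candidates, which cluster within $O(1)$ of the top of the window, could have $\hat{h}_D\circ g_n$ exactly equal to a single forbidden value, and raw Schanuel counting does not exclude this. The clean fix is to choose candidates $p\in\bP^1(K)\setminus g_n^{-1}(W_n)$ whose $h_{g_n^*D}$-heights are pairwise separated by more than twice the constant implicit in $\hat{h}_D\circ g_n=h_{g_n^*D}+O(1)$; this yields more than $n-1$ distinct values of $\hat{h}_D\circ g_n$ in the window, so at least one candidate avoids every forbidden value, and the induction closes as you intend.
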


We also need the following structure theorem of endomorphisms.

\begin{proposition}[cf.~\cite{jia2020surjective}*{Theorem 1.1}]
\label{p:surf_cls}
Let $f\colon X\to X$ be a non-isomorphic surjective endomorphism of a normal projective surface over $\bk$.
Then, replacing f with a positive power, one of the following holds.
\begin{enumerate}[label=(\roman*)]
	\item $\rho(X)=2$;
	there is a $\bP^1$-fibration $X\to C$ to a smooth projective curve of genus $\geq 1$, and $f$ descends to an automorphism of finite order on the curve $C$.
	\item $f$ lifts to an endomorphism $f|_V$ on a smooth projective surface $V$ via a generically finite surjective morphism $V \to X$.
	\item $X$ is a rational surface.
\end{enumerate}
\end{proposition}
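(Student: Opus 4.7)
The plan is to run a case analysis based on the Kodaira dimension $\kappa(\tX)$ of a minimal resolution $\sigma\colon \tX\to X$, combined with an analysis of whether $f$ lifts to an endomorphism of $\tX$. Since $f$ is surjective but not an isomorphism, normality of $X$ forces $\deg f\geq 2$; standard arguments (iterating $f$ against an invariant big line bundle) then rule out $\kappa(\tX)=2$ and, after more care, $\kappa(\tX)=1$ with only finitely many exceptions, so we may focus on $\kappa(\tX)\in\{-\infty,0\}$.

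First I would attempt to lift $f$ directly. The minimal resolution is functorial for the category of surjective endomorphisms whenever the indeterminacy locus does not interact badly with the exceptional divisors; if such a lift $\tf\colon\tX\to\tX$ exists, then case (ii) holds with $V=\tX$ and we are done. Otherwise, I would look for a different smooth cover. When $\kappa(\tX)=0$, classification of smooth projective surfaces with $\kappa=0$ admitting a degree-${>}1$ endomorphism restricts $\tX$ to be abelian or hyperelliptic (K3 and Enriques surfaces admit no such endomorphisms), and passing to the abelian cover provided by the hyperelliptic structure gives a generically finite surjective morphism $V\to X$ with $V$ abelian (hence smooth) and $f$ lifting to $V$; this places us in case (ii).

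The remaining and most delicate case is $\kappa(\tX)=-\infty$, i.e.\ $X$ uniruled. If $\tX$ is rational, so is $X$, giving case (iii). Otherwise, the MRC fibration realises $\tX$ as a geometrically ruled surface over a smooth curve $C$ with $g(C)\geq 1$; after iteration this fibration is $\tf$-equivariant and descends (via $\sigma$) to a $\bP^1$-fibration $X\to C$, and $f$ descends to $h\colon C\to C$. Because $g(C)\geq 1$, $h$ is a surjective endomorphism of a curve of positive genus, hence an automorphism; iterating once more and using the Hurwitz bound (or the structure of $\operatorname{Aut}(C)$ for $g(C)\geq 2$, and a degree-count for $g(C)=1$ that forbids multiplication-type maps from being compatible with the descent), $h$ has finite order. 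To get $\rho(X)=2$, I would argue that if $\rho(X)\geq 3$ then one can produce a further $(-1)$-curve or horizontal contraction that is $f$-equivariant after iteration, contradicting the assumption that $f$ did not already lift to a smaller smooth model.

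The main obstacle is the last step: showing that non-liftability of $f$ to $\tX$, combined with the ruled structure over a positive-genus base, forces precisely $\rho(X)=2$ and excludes pathological singularities. This requires a careful accounting of the $f$-equivariant MMP on $\tX$ over $C$ and of how the exceptional locus of $\sigma$ is permuted by $\tf$; this is essentially the technical core of \cite{jia2020surjective}, and I would lean on their equivariant contraction criteria to finish.
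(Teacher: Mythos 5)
Your proposal does not follow the paper's route, and as written it does not close. The paper does not reprove the classification at all: it deduces the proposition directly from \cite{jia2020surjective}*{Theorem 1.1}, simply sorting that theorem's nine cases into (i)--(iii); the only extra observation is that in the fibration case $f$ descends to an automorphism of the base curve, hence cannot be polarised, so $\rho(X)=2$ follows from \cite{meng2019kawaguchi}*{Theorem 5.4}. You instead sketch a re-derivation from scratch, and several steps of that sketch are incorrect or missing. First, the dismissal of $\kappa(\tX)=1$ is wrong: properly elliptic surfaces of quasi-bundle type (in the Fujimoto--Nakayama classification) do admit surjective endomorphisms of degree $>1$, they are not ``finitely many exceptions'', and your case analysis never places them into case (ii). Second, your entire division by $\kappa(\tX)$ tacitly assumes $f$ lifts to the minimal resolution $\tX$; for normal surfaces this is exactly what fails in general, which is why case (ii) is formulated with an auxiliary generically finite cover $V\to X$ rather than with $V=\tX$. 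You flag the issue but offer no criterion for when the lift exists, and your $\kappa(\tX)=0$ argument (applying the smooth classification to $\tX$, then passing to an abelian cover) silently uses such a lift.

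Third, in the uniruled non-rational case, $\tX$ is only a minimal resolution, not a relatively minimal ruled surface, so before one can speak of the $\bP^1$-fibration $X\to C$ one needs an $f$-equivariant MMP over $C$ and a check that the $\sigma$-exceptional curves are vertical; the finite-order claim for a genus-one base does not follow from a degree count, since an automorphism of an elliptic curve may be an infinite-order translation; and your argument for $\rho(X)=2$ (``if $\rho(X)\geq 3$, produce a further equivariant contraction contradicting non-liftability'') is a heuristic, not a proof. You acknowledge that these points constitute ``the technical core of \cite{jia2020surjective}'' and propose to lean on that paper's equivariant contraction machinery --- but that amounts to invoking the very result being cited, without the precise statement (their Theorem 1.1) that actually finishes the argument. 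So the proposal either has to import the cited theorem wholesale, in which case the honest proof is the paper's two-line case translation together with the polarisation/Picard-number observation, or it has to supply the omitted equivariant arguments, which it does not.
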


\begin{proof}
We use \cite{jia2020surjective}*{Theorem 1.1}.
Cases (1), (3) and (8) imply our (ii).
Cases (4) $\sim$ (7) and (9) lead to our (iii).
Case (2) implies our (i), noting that $f$ cannot be polarised since it descends to an automorphism and hence $\rho(X) = 2$ by \cite{meng2019kawaguchi}*{Theorem 5.4}.
\end{proof}

\begin{proof}[Proof of Theorem {\ref{t:surf}}]
When $f$ is an automorphism, we may take an equivariant resolution of $(X, f)$ and assume that $X$ is smooth (cf.~Lemma~\ref{l:bir_stable}).
In this case, the theorem follows from \cite{sano2021zariski}*{Theorem 1.5}.

Now we assume that $\deg(f)\geq 2$.
We apply Proposition~\ref{p:surf_cls} to $(X_{\oK},f_{\oK})$ (cf.~\cite{sano2021zariski}*{Lemma 3.3}).
In either case, we may replace $K$ with a finite field extension so that the varieties and morphisms are defined over $K$.

In Case~\ref{p:surf_cls} (ii), the theorem follows from Lemma~\ref{l:bir_stable} and \cite{sano2021zariski}*{Theorem 1.5}.
In Case~\ref{p:surf_cls} (iii), the theorem is a consequence of \cite{sano2020zariski}*{Theorem 1.11}.

In Case~\ref{p:surf_cls} (i), we may assume $g(C) = 1$;
otherwise, $X$ does not satisfy potential density (cf.~Remark~\ref{r:after_conj}\ref{r:nonPD}).
Let $F \cong \bP^1$ be a general fibre of $X \to C$.
After replacing $K$ with a finite field extension, there is a numerically non-zero nef $\bR$-Cartier divisor $D$ on $X$ such that $f^*D \sim_{\bR} d_1(f) D$ (cf.~\cite{matsuzawa2021invariant}*{Theorem 6.4}).
The numerical equivalence class of $D$ is not a multiple of that of the fibre $F$ since $f^*F\sim_{\bR}F$ and $d_1(f)>1$.
Then $(D \cdot F)>0$, by the Hodge index theorem.
Thus, $(X, f)$ satisfies the condition \eqref{eq:property} in Lemma~\ref{l:dagger} and hence satisfies $(DR)$.
\end{proof}

Before proving Theorem~\ref{t:rc_dr}, we need a stronger version of Corollary~\ref{c:pd_rc} in dimension $3$.

\begin{lemma}\label{l:pd_rc2}
Let $X$ be a rationally connected smooth projective threefold over $\bk$ and $f\colon X\to X$ an int-amplified endomorphism.
Let $D$ be a numerically non-zero nef $\bR$-Cartier divisor on $X$.
Then there is a rational curve $C\subseteq X$ such that $O_f(C)$ is Zariski dense and $(D \cdot C)>0$.
\end{lemma}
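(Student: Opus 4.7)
The plan is to refine the iterative argument of Lemma~\ref{l:pd_rc} so that at every step the rational curve produced additionally has positive intersection with $D$. It therefore suffices to establish the following sub-claim, which we will use in place of the ``take a rational curve through $x$ and $y$'' step.

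\medskip
\noindent\emph{Sub-claim.} For any two points $x, y \in X(\bk)$, there is an irreducible rational curve $C \subseteq X$ passing through $x$ and $y$ with $(D \cdot C) > 0$.
\medskip

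Granting the sub-claim, I replay the proof of Lemma~\ref{l:pd_rc} verbatim. At each iteration, starting with a point $x_i \in X(\bk)$ with $Z_f(x_i) \neq X$, I pick an $f$-fixed point $y_i \notin Z_f(x_i)$ (after replacing $f$ by an iterate, as in Lemma~\ref{l:pd_rc}) and invoke the sub-claim to obtain an irreducible rational curve $C_i$ through $x_i$ and $y_i$ with $(D \cdot C_i) > 0$. If $Z_f(C_i) = X$, then $C_i$ is the desired curve. Otherwise, the dimension-increasing argument of Lemma~\ref{l:pd_rc}, combined with the Zariski dense orbit conjecture in dimension $\leq 2$ (Remark~\ref{r:after_conj}\ref{r:ZDO_curve},\ref{r:ZDO_surf}), yields a new point $x_{i+1}$ with $\dim Z_f(x_{i+1}) > \dim Z_f(x_i)$. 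Since $\dim Z_f \leq 3$, this process terminates with some $C_i$ satisfying $Z_f(C_i) = X$ and $(D \cdot C_i) > 0$.

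To prove the sub-claim I proceed in two parts. First, since $X$ is a smooth projective rationally connected variety, the Koll\'ar--Miyaoka--Mori theorem supplies an irreducible rational curve $R_0$ through $x$ and $y$. Second, since $D$ is nef and numerically non-zero, $\pm D$ cannot both be pseudo-effective, so some movable curve class $\gamma$ satisfies $(D \cdot \gamma) > 0$; the Boucksom--Demailly--Paun--Peternell theorem for uniruled varieties identifies the movable cone $\overline{\mathrm{ME}}_1(X)$ with the closure of the convex cone generated by classes of rational curves in covering families, so by continuity of the intersection pairing there is a covering family $\{R_t\}_{t \in T}$ of rational curves on $X$ with $(D \cdot R_t) > 0$. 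Attaching a deformation $R_1 = R_{t_0}$ of a member of this family to $R_0$ at a transverse smooth intersection point and invoking Koll\'ar's smoothing theorem for free rational chains (\cite{kollar1996rational}*{Chapter II.7}) then produces an irreducible rational curve $C$ through $x$ and $y$ with $[C] = [R_0] + [R_1]$; hence $(D \cdot C) = (D \cdot R_0) + (D \cdot R_1) \geq (D \cdot R_1) > 0$ by nefness of $D$.

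The main obstacle is the sub-claim, and within it the production of a covering rational family with positive $D$-intersection. This is the deepest input, relying on the Boucksom--Demailly--Paun--Peternell description of the movable cone on a uniruled variety. The smoothing step is standard but requires care in arranging sufficient freeness of $R_0$ so that the smoothed curve still passes through both marked points $x$ and $y$.
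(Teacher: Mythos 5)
Your reduction to the Sub-claim is structurally fine: re-running the induction of Lemma~\ref{l:pd_rc} with curves of positive $D$-degree does yield the lemma once the Sub-claim is available (replacing $f$ by an iterate is harmless since $O_{f^s}(C)\subseteq O_f(C)$). The genuine gap is in your proof of the Sub-claim, at its deepest point. The result you attribute to Boucksom--Demailly--P\u{a}un--Peternell is not what they prove: BDPP establish the duality between the pseudo-effective cone of divisors and the closed cone of movable curves, and the uniruledness criterion, but they do \emph{not} identify the movable cone of a uniruled variety with the closed cone generated by classes of rational curves in covering families; that description is not known in general. So the key input you need --- a covering family of rational curves $\{R_t\}$ with $(D\cdot R_t)>0$ --- is unjustified as written (it is true, but requires a different argument, e.g.\ that a very free rational curve has ample normal bundle and hence class in the interior of $\cNE{X}$, or a nef-reduction argument, or the semiampleness of $D$ discussed below). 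A second, smaller gap is the smoothing step: to smooth the nodal curve $R_0\cup R_1$ while fixing the two assigned points $x,y$ on the handle, Koll\'ar's comb results (\cite{kollar1996rational}*{II.7}) need the handle to remain positive after twisting down by the assigned points (i.e.\ $R_0$ very free with enough positivity, teeth attached suitably); you flag this but do not supply it, and with only one tooth and two fixed points it is exactly the nontrivial hypothesis.

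For comparison, the paper sidesteps all of this by using the int-amplified hypothesis structurally: by \cite{yoshikawa2020structure} the threefold $X$ is of Fano type, so the nef divisor $D$ is semiample by \cite{birkar2010existence}*{Theorem 3.9.1}, say $D\sim_{\bR}\phi^*H$ for a morphism $\phi\colon X\to Y$ with $H$ ample. Then no curve-positivity machinery is needed: one takes $x_0$ with $\dim Z_f(x_0)=2$ (using the surface case of the dense orbit conjecture, as in Lemma~\ref{l:pd_rc}), chooses the periodic point $x_1$ outside $Z_f(x_0)\cup\phi^{-1}(\phi(x_0))$, and any rational curve through $x_0,x_1$ is automatically not contracted by $\phi$, so $(D\cdot C)=(H\cdot\phi_*C)>0$, while density of $O_f(C)$ follows as before. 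If you repair your Sub-claim --- most easily by importing this semiampleness of $D$, or by proving that very free curves meet every numerically nontrivial nef divisor positively --- your alternative route goes through, and it is somewhat more self-contained on the deformation-theoretic side; but as stated the BDPP citation does not support the step it is asked to carry.
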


\begin{proof}
By \cite{yoshikawa2020structure}*{Corollary 1.4}, $X$ is of Fano type.
Then there is a surjective morphism $\phi\colon X \to Y$ to a projective variety $Y$ such that $D \sim_{\bR} \phi^*H$ for some ample $\bR$-divisor on $Y$ by \cite{birkar2010existence}*{Theorem 3.9.1}.

If $f$ has a Zariski dense orbit $O_f(x)$, then there is a rational curve passing through $x$ (such a curve exists since $X$ is rationally connected) and satisfying the claims.
So we may assume that $f$ has no Zariski dense orbit.

Since Conjecture~\ref{Conj:ZDO} is known for surfaces (cf.~\cite{jia2020surjective}*{Theorem 1.9}), we can take a point $x_0 \in X$ such that $\dim Z_f(x_0)=2$ (cf.~Proof of Lemma~\ref{l:pd_rc}).
Replacing $f$ by a power and $x_0$ by $f^N(x_0)$ for some integer $N\geq 0$, we may assume that $Z_f(x_0)$ is irreducible.
Take an $f$-periodic point $x_1 \in X$ such that $x_1 \not\in Z_f(x_0) \cup \phi^{-1}(\phi(x_0))$.
Take a rational curve $C \subseteq X$ containing $x_0, x_1$.
We see that $O_f(C)$ is Zariski dense as in the proof of Lemma~\ref{l:pd_rc}.
Now $\phi(C)$ is not a point by construction, so
\[
	(D \cdot C)=(\phi^*H \cdot C)=(H \cdot \phi_*C)>0.
\]
Thus $C$ satisfies the claims.
\end{proof}

\begin{proof}[Proof of Theorem~\ref{t:rc_dr}]
By \cite{matsuzawa2021invariant}*{Theorem 6.4}, replacing $K$ by a finite extension, there is a numerically non-zero nef $\bR$-Cartier divisor $D$ on $X$ such that $f^*D \sim_{\bR} d_1(f) D$.
Lemma~\ref{l:pd_rc2} implies that, replacing $K$ with a finite extension so that the curve $C$ there (and $f$) are defined over $K$, the pair $(X,f)$ satisfies \eqref{eq:property} in Lemma~\ref{l:dagger}.
Hence $(X,f)$ satisfies $(DR)$.
\end{proof}

\begin{bibdiv}
\begin{biblist}

\bib{amerik2011remarks}{article}{
	author={Amerik, E.},
	author={Bogomolov, F.},
	author={Rovinsky, M.},
	title={Remarks on endomorphisms and rational points},
	date={2011},
	journal={Compos. Math.},
	volume={147},
	number={6},
	pages={1819\ndash 1842},
}

\bib{amerik2011existence}{article}{
	author={Amerik, E.},
	title={Existence of non-preperiodic algebraic points for a rational self-map of infinite order},
	date={2011},
	journal={Math. Res. Lett.},
	volume={18},
	number={02},
	pages={251–256},
}

\bib{birkar2010existence}{article}{
	author={Birkar, C.},
	author={Cascini, P.},
	author={Hacon, C.~D.},
	author={McKernan, J.},
	title={Existence of minimal models for varieties of log general type},
	date={2010},
	ISSN={0894-0347},
	journal={J. Amer. Math. Soc.},
	volume={23},
	number={2},
	pages={405\ndash 468},
	url={https://doi.org/10.1090/S0894-0347-09-00649-3},
}


\bib{campana2004orbifolds}{inproceedings}{
	author={Campana, F.},
	title={Orbifolds, special varieties and classification theory},
	booktitle={Annales de l'institut fourier},
	volume={54},
	pages={499\ndash 630},
	date={2004},
}

\bib{debarre2001higher}{book}{
	author={Debarre, O.},
	title={Higher-dimensional algebraic geometry},
	series={Universitext},
	publisher={Springer-Verlag},
	place={New York},
	date={2001},
	ISBN={0-387-95227-6},
}

\bib{fakhruddin2003questions}{article}{
	author={Fakhruddin, N.},
	title={Questions on self maps of algebraic varieties},
	organization={Citeseer},
	date={2003},
	journal={J. Ramanujan Math. Soc.},
	volume={18},
	number={2},
	pages={109\ndash 122},
}

\bib{faltings1983endlichkeitssatze}{article}{
	author={Faltings, G.},
	title={Endlichkeitss{\"a}tze f{\"u}r abelsche variet{\"a}ten {\"u}ber zahlk{\"o}rpern},
	date={1983},
	journal={Invent. Math.},
	volume={73},
	number={3},
	pages={349\ndash 366},
}

\bib{ghioca2017density}{article}{
	author={Ghioca, D.},
	author={Scanlon, T.},
	title={Density of orbits of endomorphisms of abelian varieties},
	date={2017},
	ISSN={1088-6850},
	journal={Trans. Amer. Math. Soc.},
	volume={369},
	number={1},
	pages={447\ndash 466},
}

\bib{hartshorne1977algebraic}{book}{
	author={Hartshorne, R.},
	title={Algebraic geometry},
	series={Grad. Texts in Math.},
	publisher={Springer-Verlag},
	place={New York},
	date={1977},
	volume={52},
	ISBN={0-387-90244-9},
}

\bib{hassett2003potential}{incollection}{
	author={Hassett, B.},
	title={Potential density of rational points on algebraic varieties},
	date={2003},
	booktitle={Higher dimensional varieties and rational points},
	publisher={Springer},
	pages={223\ndash 282},
}

\bib{jia2020surjective}{article}{
	author={Jia, J.},
	author={Xie, J.},
	author={Zhang, D.-Q.},
	title={Surjective endomorphisms of projective surfaces -- the existence of infinitely many dense orbits},
	eprint={2005.03628},
	date={2020},
}

\bib{kollar1998birational}{book}{
	author={Koll{\'a}r, J.},
	author={Mori, S.},
	title={Birational geometry of algebraic varieties},
	series={Cambridge Tracts in Math.},
	publisher={Cambridge Univ. Press},
	date={1998},
}


\bib{kollar1996rational}{book}{
	author={Koll{\'a}r, J.},
	series={Erg. Math. 32 (3. Folge)},
	title={Rational Curves on Algebraic Varieties},
	ISBN={978-3-642-08219-1},
	url={http://link.springer.com/10.1007/978-3-662-03276-3},
	DOI={10.1007/978-3-662-03276-3},
	publisher={Springer-Verlag},
	place={Berlin},
	year={1996},
}

\bib{kawaguchi2014examples}{article}{
	author={Kawaguchi, S.},
	author={Silverman, J.~H.},
	title={Examples of dynamical degree equals arithmetic degree},
	date={2014},
	journal={Michigan Math. J.},
	volume={63},
	number={1},
	pages={41\ndash 63},
}

\bib{kawaguchi2016dynamical}{article}{
	author={Kawaguchi, S.},
	author={Silverman, J.~H.},
	title={Dynamical canonical heights for Jordan blocks, arithmetic degrees of orbits, and nef canonical heights on abelian varieties},
	date={2016},
	journal={Trans. Amer. Math. Soc.},
	volume={368},
	number={7},
	pages={5009\ndash 5035},
}

\bib{meng2020building}{article}{
	author={Meng, S.},
	title={Building blocks of amplified endomorphisms of normal projective varieties},
	date={2020},
	journal={Math. Z.},
	volume={294},
	number={3},
	pages={1727\ndash 1747},
	url={https://doi.org/10.1007/s00209-019-02316-7},
}

\bib{matsuzawa2021invariant}{article}{
	author={Matsuzawa, Y.},
	author={Meng, S.},
	author={Shibata, T.},
	author={Zhang, D.-Q.},
	author={Zhong, G.},
	title={Invariant subvarieties with small dynamical degree},
	ISSN={1687-0247},
	url={http://dx.doi.org/10.1093/imrn/rnab039},
	journal={Int. Math. Res. Not. IMRN},
	publisher={Oxford University Press (OUP)},
	year={2021},
}

\bib{matsuzawa2020nondensity}{article}{
	author={Matsuzawa, Y.},
	author={Meng, S.},
	author={Shibata, T.},
	author={Zhang, D.-Q.},
	title={Non-density of points of small arithmetic degrees},
	eprint={2002.10976},
	date={2020},
}


\bib{medvedev2009polynomial}{article}{
	author={Medvedev, A.},
	author={Scanlon, T.},
	title={Polynomial dynamics},
	eprint={0901.2352v1},
	date={2009},
}

\bib{matsuzawa2019kawaguchi}{article}{
	author={Matsuzawa, Y.},
	author={Yoshikawa, S.},
	title={Kawaguchi-Silverman conjecture for endomorphisms on rationally connected varieties admitting an int-amplified endomorphism},
	eprint={1908.11537},
	date={2019},
}

\bib{meng2019kawaguchi}{article}{
	author={Meng, S.},
	author={Zhang, D.-Q.},
	title={Kawaguchi-Silverman conjecture for surjective endomorphisms},
	eprint={1908.01605},
	date={2019},
}

\bib{meng2020semi}{article}{
	author={Meng, S.},
	author={Zhang, D.-Q.},
	title={Semi-group structure of all endomorphisms of a projective variety admitting a polarized endomorphism},
	date={2020},
	journal={Math. Res. Lett.},
	volume={27},
	number={2},
	pages={523\ndash 549},
}

\bib{nakayama2010polarized}{article}{
	author={Nakayama, N.},
	author={Zhang, D.-Q.},
	title={Polarized endomorphisms of complex normal varieties},
	date={2010},
	journal={Math. Ann.},
	volume={346},
	number={4},
	pages={991\ndash 1018},
	isbn={1432-1807},
	url={https://doi.org/10.1007/s00208-009-0420-y},
}

\bib{sano2020dynamical}{article}{
	title={Dynamical degree and arithmetic degree of endomorphisms on product varieties},
	author={Sano, K.},
	journal={Tohoku Math. J. (2)},
	publisher={Tohoku University, Mathematical Institute},
	volume={72},
	number={1},
	pages={1\ndash 13},
	date={2020},
}

\bib{silverman2017arithmetic}{article}{
	title={Arithmetic and dynamical degrees on abelian varieties},
	author={Silverman, J.~H},
	journal={J. Th{\'e}or. Nombres Bordeaux},
	volume={29},
	number={1},
	pages={151\ndash 167},
	date={2017},
}

\bib{sano2020zariski}{article}{
	author={Sano, K.},
	author={Shibata, T.},
	title={Zariski density of points with maximal arithmetic degree},
	eprint={2007.15180},
	date={2020},
}

\bib{sano2021zariski}{article}{
	author={Sano, K.},
	author={Shibata, T.},
	title={Zariski density of points with maximal arithmetic degree for surfaces},
	eprint={2101.08417},
	date={2021},
}

\bib{xie2019existence}{article}{
	author={Xie, J.},
	title={The existence of Zariski dense orbits for endomorphisms of projective surfaces (with an appendix in collaboration with Thomas Tucker)},
	eprint={1905.07021},
	date={2019},
}

\bib{yoshikawa2020structure}{article}{
	author={Yoshikawa, S.},
	title={Structure of Fano fibrations of varieties admitting an int-amplified endomorphism},
	eprint={2002.01257},
	date={2020},
}

\bib{zhang2006rational}{article}{
	author={Zhang, Q.},
	title={Rational connectedness of log \textit{Q}-Fano varieties},
	date={2006},
	journal={J. Reine Angew. Math.},
	volume={2006},
	number={590},
	pages={131\ndash 142},
}

\end{biblist}
\end{bibdiv}

\end{document}